\documentclass[11pt, reqno]{amsart}
\usepackage{a4wide}
\usepackage{hyperref}
\usepackage{amsmath}
\usepackage{paralist}
\usepackage{amssymb}
\usepackage{amsthm}
\usepackage{amscd}
\usepackage{graphicx,mathrsfs}
\usepackage{fontenc}
\usepackage{enumitem}
\renewcommand{\subjclassname}{2020 Mathematics Subject Classification}
\begin{document}
\title[Memory Interactions Model]
{Asymptotic Flocking Behavior in Particle and Kinetic Systems with biological Memory Term}
\author[ Yishuo Wang, Yawei Wei]
{Yishuo Wang, Yawei Wei}

\address{Yawei Wei \newline
School of Mathematical Sciences and LPMC\\ Nankai University\\ Tianjin 300071, China}
\email{weiyawei@nankai.edu.cn}

\address{Yishuo Wang \newline
School of Mathematical and Sciences \\ Nankai University\\ Tianjin 300071, China}
\email{1120240033@mail.nankai.edu.cn}

\thanks{Acknowledgements: This work is supported by the NSFC under the grands 12271269, and  the Fundamental Research Funds for the Central Universities.}

\subjclass[2020]{35Q83, 35Q70, 35B40, 92D50}

\keywords{Memory interactions, Flocking behavior, Mean-field limit, Vlasov equation, Asymptotic analysis}

\begin{abstract}
We investigate a Cucker-Smale type flocking model with a biological memory term and conclude that memory makes flocking harder. The model introduces $h(t)$ to describe memory, with dynamics $\frac{dh}{dt}=x(t)+v(t)-\lambda h(t)$ reflecting path, velocity dependence and exponential decay. In microscopic level, the flocking phenomenon becomes conditional, requires communication kernel $\psi(t)$ has polynomial lower bound and memory deviation $H(t)$ decays exponentially. In mesoscopic level, flocking rates exhibit exponential or algebraic decay and the range of communication strength $\beta$ become smaller.
\end{abstract}

\maketitle
\numberwithin{equation}{section}
\newtheorem{theorem}{theorem}[section]
\newtheorem{remark}{remark}[section]
\newtheorem{lemma}{Lemma}[section]
 \newtheorem{corollary}{Corollary}[section]
\newtheorem{assumption}{Assumption}[section]
\newtheorem{definition}{Definition}[section]
\newtheorem{proposition}{proposition}[section]
\newcommand{\R}{\mathbb{R}}
\newcommand{\B}{\mathbf{B}}
\newcommand{\e}{{\mathcal {E}}}
\newcommand{\M}{{\mathcal{H}}}
\newcommand{\Y}{{\mathbf{P}}}

\renewcommand{\subjclassname}
\allowdisplaybreaks

\section{Introduction}

In this paper, we study the dynamic Cucker-Smale model with a biological memory:
\begin{equation}\label{1.1}
\begin{cases}
\frac{dx^{i}}{dt}=v^{i},\\
\frac{dv^{i}}{dt}=\frac{1}{N}\displaystyle\sum_{j=1}^{N}r(x^{i},x^{j})(v^{j}-v^{i})-h^{i},\\
\frac{dh^{i}}{dt}=x^{i}(t)+v^{i}(t)-\lambda h^{i}(t),\;\;\lambda>0.
\end{cases}
\end{equation}

Here for each $i=1,2,\cdots,N$, $x^{i}(t)=(x_{1}^{i}(t),\cdots,x_{d}^{i}(t))\in\R^{d}$ denotes the position of agent $i$, $v^{i}(t)=(v_{1}^{i}(t),\cdots,v_{d}^{i}(t))\in\R^{d}$ denotes the velocity of agent $i$.  The communication kernel $r(x,y)$ is symmetric, bi-particle, non-increasing, Lipschitz continues and there exists a constant $A$ such that
\begin{equation}\label{1.2}
r(x,y)=r(y,x)\leq A,\;\;for\;x,y\in\R^{d}.
\end{equation}

In \eqref{1.1}, we introduce $h^{i}(t)=(h_{1}^{i}(t),\cdots,h_{d}^{i}(t))\in\R^{d}$ for each agent $i$ to represent the memory of agent depends on the previous path and velovity. The memory term $h^{i}(t)$ has the following convolution form
\begin{equation}\label{1.3}
h^{i}(t)=h^{i}(0)e^{-\lambda t}+\int_{0}^{t}(x^{i}(s)+v^{i}(s))e^{\lambda(s-t)}ds,\;\;i=1,2,\cdots,N.
\end{equation}

In both natural and artificial systems, many groups (such as bird flocks \cite{Tadmor2017From,Toner1998Flocks}, fish schools \cite{Degond2008Large,Parrish2002Self}, bacteria \cite{Topaz2004Swarming}, drone formations \cite{He2017Feedback}, etc.) can achieve global coordinated motion through local individual communications, a phenomenon known as flocking behavior.  The research on flocking behavior was first proposed by Viscek \cite{Degond2008Continuum,Vicsek2006Novel}, who studied the communication between individuals and adjacent individuals within a fixed range. While subsequent extensions like the Cucker-Smale (C-S) model \cite{CS2007,Cucker2007On} introduced nonlocal, distance-dependent communications:
\begin{equation}\label{1.4}
\dot{x}^{i}=v^{i},\;\;\;\dot{v}^{i}=\frac{1}{N}\displaystyle\sum_{j=1}^{N}r(x^{i},x^{j})(v^{j}-v^{i})
\end{equation}

In recent works \cite{Carrillo2010Asymptotic,Carrillo2017A,Motsch2011A,Choi2017Emergent} further research on flocking behavior with different communication kernels. It is assumed that individuals make decisions only based on the current state of their neighbors. However, in real biological systems, the behavior of individual is often influenced by its path experiences and historical velocities. The historical states is described by time delay terms as \cite{Pignotti2017Flocking}.

For example, fish will remember their foraging path, safe zone, predator location and swimming speed \cite{Brown2003Social,Biro2016Bringing}; migratory animals will follow their previous migration paths and previous speed to migrate \cite{Odling2003influence}, and so on. Caplan and Kato found that the brain reduces the autocorrelation of input signals through retinal dog transform and other methods, so that natural istic stimuli (such as paths and images) can be efficiently encoded by convolutional memory as \cite{Caplan2019}. Therefore, based on the work above all, we studies the C-S model with memory term \eqref{1.1} to considering the influenced of historical experiences. The memory term $h^{i}(t)$ has convolutional form and acts as an internal driving force beyond the communications between biological individuals, directly affecting the individual's velocity dynamics equation.

This paper is organized as follows. The discussion results of the flocking phenomenon in the microscopic level is presented in Section 2. In section 3, we derive the mesoscopic memory model and prove its existence. Finally in section 4, we discuss the time-asymptotic flocking behavior of system in mesoscopic level.

\section{Main results}
 Let $(x^{c}(t),v^{c}(t),h^{c}(t))$ be the center of system \eqref{1.1},
\begin{equation}\label{2.1}
x^{c}(t)=\frac{1}{N}\displaystyle\sum_{i=1}^{N}x^{i}(t),\;\;v^{c}(t)=\frac{1}{N}\displaystyle\sum_{i=1}^{N}v^{i}(t),\;\;h^{c}(t)=\frac{1}{N}\displaystyle\sum_{i=1}^{N}h^{i}(t).
\end{equation}

We construct the potential energy function, kinetic energy function and memory deviation function
\begin{equation*}
X(t)=\displaystyle\sum_{i=1}^{N}\vert x^{i}(t)-x^{c}(t)\vert^{2},\;V(t)=\displaystyle\sum_{i=1}^{N}\vert v^{i}(t)-v^{c}(t)\vert^{2},\;H(t)=\displaystyle\sum_{i=1}^{N}\vert h^{i}(t)-h^{c}(t)\vert^{2}.
\end{equation*}

Let $m_{j}(t),j=1,2$ represent the total momentum and energy of the system \eqref{1.1},
\begin{displaymath}
m_{1}(t):=\displaystyle\sum_{i=1}^{N}v^{i}(t),\;\;m_{2}(t):=\frac{1}{2}\displaystyle\sum_{i=1}^{N}\vert v^{i}(t)\vert^{2}.
\end{displaymath}

\begin{proposition}\label{proposition 2.1}Let $(x^{i}(t),v^{i}(t),h^{i}(t))$ be the solution of the system \eqref{1.1}. We have the following properties
\begin{equation}\label{2.2}
\frac{d}{dt}m_{1}(t)= -Nh^{c}(t),
\end{equation}
\begin{equation}\label{2.3}
\frac{d}{dt}m_{2}(t)\leq\sqrt{V(t)}\sqrt{H(t)}-Nv^{c}\cdot h^{c}.
\end{equation}
\end{proposition}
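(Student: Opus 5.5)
Both identities come from differentiating along trajectories of \eqref{1.1} and using the symmetry $r(x^i,x^j)=r(x^j,x^i)$ from \eqref{1.2}.

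\textbf{Momentum identity \eqref{2.2}.} Summing the velocity equation over $i$ gives
\[
\frac{d}{dt}m_1=\frac1N\sum_{i,j}r(x^i,x^j)(v^j-v^i)-\sum_i h^i .
\]
The double sum is antisymmetric under the exchange $i\leftrightarrow j$, because $r$ is symmetric. Hence it vanishes. The remaining term is $-\sum_i h^i=-Nh^c$, which is \eqref{2.2}.

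\textbf{Energy inequality \eqref{2.3}.} I would take the inner product of the velocity equation with $v^i$ and sum over $i$. The communication part symmetrizes in the standard Cucker--Smale way:
\[
\frac1N\sum_{i,j}r(x^i,x^j)(v^j-v^i)\cdot v^i=-\frac1{2N}\sum_{i,j}r(x^i,x^j)|v^j-v^i|^2\le 0,
\]
using $r\ge0$. Presumably nonnegativity of $r$ is intended here, since $r$ is a communication kernel, although \eqref{1.2} only states the upper bound $A$. Therefore
\[
\frac{d}{dt}m_2\le-\sum_i v^i\cdot h^i .
\]

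\textbf{Splitting the memory term.} Next I would write $v^i=(v^i-v^c)+v^c$ and $h^i=(h^i-h^c)+h^c$. Since $\sum_i(v^i-v^c)=0$ and $\sum_i(h^i-h^c)=0$, both cross terms drop out, leaving
\[
\sum_i v^i\cdot h^i=\sum_i (v^i-v^c)\cdot(h^i-h^c)+N\,v^c\cdot h^c .
\]
Applying Cauchy--Schwarz to the fluctuation part gives
\[
-\sum_i (v^i-v^c)\cdot(h^i-h^c)\le\sqrt{V}\sqrt{H}.
\]
Combining this with the previous inequality yields \eqref{2.3}.

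\textbf{Main obstacle.} The computation is routine. The only point needing care is the sign of the dissipation term, which requires $r\ge0$; I would state that assumption explicitly. Everything else is the centering identity and one application of Cauchy--Schwarz.
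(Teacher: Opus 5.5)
Your proposal is correct and matches the paper's proof step for step. Symmetry of $r$ kills the double sum in $\frac{d}{dt}m_1$; the standard symmetrization produces the dissipation term $-\frac{1}{2N}\sum_{i,j}r(x^i,x^j)|v^i-v^j|^2$, which is then dropped; and the centering identity plus Cauchy--Schwarz bounds $-\sum_i v^i\cdot h^i$. Your remark that dropping the dissipation term requires $r\ge 0$ is a fair clarification, since the paper uses this tacitly (it holds for the kernel $A(1+|x-y|^2)^{-\beta}$ with $A>0$).
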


\begin{proposition}\label{proposition 2.2}[Central system] Let $(x^{i}(t),v^{i}(t),h^{i}(t))$ be the solution of the system \eqref{1.1}. When $\lambda>1$, there exist constants $C>0$, $\delta>0$ such that
\begin{equation}\label{2.4}
\vert x^{c}(t)\vert+\vert v^{c}(t)\vert+\vert h^{c}(t)\vert\leq Ce^{-\delta t}.
\end{equation}
\end{proposition}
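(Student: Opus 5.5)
The plan is to reduce the statement to a constant-coefficient linear ODE for the center $(x^{c},v^{c},h^{c})$ and then check that its coefficient matrix is Hurwitz exactly when $\lambda>1$.

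\textbf{Step 1 (closed system for the center).} Average the three equations of \eqref{1.1} over $i$. The first and third equations are linear, so they give $\dot x^{c}=v^{c}$ and $\dot h^{c}=x^{c}+v^{c}-\lambda h^{c}$ directly. For the second equation we use the symmetry \eqref{1.2}. Exchanging $i$ and $j$ in the double sum shows that
\[
\frac{1}{N^{2}}\sum_{i,j}r(x^{i},x^{j})(v^{j}-v^{i})=0 .
\]
Hence $\dot v^{c}=-h^{c}$, which is exactly \eqref{2.2} divided by $N$. The center therefore solves, componentwise in each of the $d$ coordinates, the linear system $\dot Z=MZ$, where $Z=(x^{c}_{k},v^{c}_{k},h^{c}_{k})^{T}$ and
\[
M=\begin{pmatrix}0&1&0\\0&0&-1\\1&1&-\lambda\end{pmatrix}.
\]
This system is decoupled from the fluctuations, so no information about $r$ beyond symmetry is needed.

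\textbf{Step 2 (spectrum of $M$).} Expanding $\det(sI-M)$ along the first row gives
\[
p(s)=s^{3}+\lambda s^{2}+s+1 .
\]
For a monic cubic $s^{3}+a s^{2}+b s+c$, the Routh--Hurwitz criterion says all roots have negative real part if and only if $a>0$, $c>0$ and $ab>c$. Here $a=\lambda$, $b=c=1$, so the condition reads $\lambda>0$ and $\lambda>1$. This is precisely the hypothesis $\lambda>1$.

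\textbf{Step 3 (decay estimate).} Let $\delta_{0}=-\max\{\operatorname{Re}s: p(s)=0\}>0$ and fix any $0<\delta<\delta_{0}$. The Jordan form of $M$ gives $\|e^{tM}\|\le C_{\delta}e^{-\delta t}$; any polynomial factors from possible repeated roots are absorbed because $\delta<\delta_{0}$. Then $|Z(t)|\le C_{\delta}|Z(0)|e^{-\delta t}$. Summing over the $d$ components and using the equivalence of norms yields \eqref{2.4}, with $C$ depending on $|x^{c}(0)|+|v^{c}(0)|+|h^{c}(0)|$.

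\textbf{Main obstacle.} The only delicate point is the sign analysis of the cubic. If one prefers to avoid quoting Routh--Hurwitz, the fallback is a direct argument. Real roots must be negative, since every coefficient of $p$ is positive. Along the imaginary axis, $p(i\omega)=(1-\lambda\omega^{2})+i(\omega-\omega^{3})$, and this vanishes only if $\omega^{2}=1$ and $\lambda\omega^{2}=1$, i.e. only at $\lambda=1$. So for $\lambda>1$ no root lies on the imaginary axis. Continuity of the roots in $\lambda$, starting from a large-$\lambda$ asymptotic check, then keeps all roots in the open left half-plane for every $\lambda>1$.
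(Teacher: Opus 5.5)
Your proposal is correct and follows essentially the same route as the paper. Both average \eqref{1.1} to get the closed linear center system \eqref{3.1}, reduce it to the characteristic polynomial $\mu^{3}+\lambda\mu^{2}+\mu+1$, and use Routh--Hurwitz ($\lambda\cdot 1>1$) to conclude exponential decay. Your bound $\|e^{tM}\|\le C_{\delta}e^{-\delta t}$ is slightly cleaner than the paper's explicit formula \eqref{3.3}, because it does not depend on the root structure (one negative real root and a complex-conjugate pair), which the paper asserts without proof, though it is true.
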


For the convenience of discussion, we use the standard C-S communication kernel \cite{Carrillo2010Particle} here,
\begin{equation*}
r(x,y)=\frac{A}{(1+\vert x-y\vert^{2})^{\beta}},\;\;\;A>0,\;\;\beta\geq0,\;\;for\;x,y\in\R^{d},
\end{equation*}
and the flocking phenomenon depends on the minimum form of communication kernel
\begin{equation*}
\psi(t):=\displaystyle\min_{1\leq i,j\leq N}r(x^{i}(t),x^{j}(t)).
\end{equation*}

\begin{theorem}\label{theorem 2.3}[Conditional flocking] Let $(x^{i}(t),v^{i}(t),h^{i}(t))$ be the solution of the system \eqref{1.1}. We make the following assumptions:
\begin{item}
\item[(1)]The minimum form of communication kernel $\psi(t)$ satisfies a polynomial lower bound, there exist constants $C>0$ and $0\leq\beta'<1$ such that
\begin{equation}\label{2.5}
\psi(t)\geq\frac{C}{(1+t)^{\beta'}}\;\;\text{satisfy}\;\;\int_{0}^{\infty}\psi(s)ds=\infty
\end{equation}
\item[(2)](Memory deviation integrability condition) There exists $\gamma>0$, such that $H(t)$ satisfies
\begin{equation}\label{2.6}
\int_{0}^{\infty}e^{\gamma t^{1-\beta'}}\sqrt{H(t)}dt<\infty
\end{equation}
\end{item}

Then we have
\begin{equation}\label{2.7}
\lim_{t\to\infty}\vert v^{i}(t)-v^{c}(t)\vert=0,\;\;i=1,\cdots,N,
\end{equation}
and
\begin{equation}\label{2.8}
\sup_{t\geq0}\vert x^{i}(t)-x^{c}(t)\vert<\infty,\;\;i=1,\cdots,N.
\end{equation}
\end{theorem}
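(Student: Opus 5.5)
We derive a differential inequality for the kinetic deviation $V(t)$, close it with a Grönwall argument that uses the lower bound \eqref{2.5}, and then integrate the resulting velocity decay to control positions.

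\textbf{Step 1: the fluctuation equation.} Subtracting the averaged equation from the $v^{i}$-equation in \eqref{1.1} gives
\begin{equation*}
\frac{d}{dt}(v^{i}-v^{c})=\frac{1}{N}\sum_{j=1}^{N}r(x^{i},x^{j})(v^{j}-v^{i})-(h^{i}-h^{c}).
\end{equation*}
By the symmetry \eqref{1.2} of $r$, the average of the interaction term vanishes, so the right-hand side involves only fluctuations. We take the inner product with $v^{i}-v^{c}$, sum over $i$, and symmetrize in $(i,j)$. This yields
\begin{equation*}
\frac{1}{2}\frac{d}{dt}V=-\frac{1}{2N}\sum_{i,j}r(x^{i},x^{j})\vert v^{j}-v^{i}\vert^{2}-\sum_{i}(v^{i}-v^{c})\cdot(h^{i}-h^{c}).
\end{equation*}
Two identities now enter. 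First, $\sum_{i,j}\vert v^{i}-v^{j}\vert^{2}=2NV$. Second, $r(x^{i},x^{j})\geq\psi(t)$. Together with Cauchy--Schwarz on the last sum, they give
\begin{equation*}
\frac{d}{dt}V\leq-2\psi(t)V+2\sqrt{V}\sqrt{H}.
\end{equation*}
The cross term is essentially the same one that appears in \eqref{2.3} of Proposition \ref{proposition 2.1}.

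\textbf{Step 2: Grönwall for $\sqrt V$.} Dividing by $\sqrt V$ (justified by working with $\sqrt{V+\varepsilon}$ and letting $\varepsilon\to0$) gives
\begin{equation*}
\frac{d}{dt}\sqrt V\leq-\psi(t)\sqrt V+\sqrt{H}.
\end{equation*}
Set $\Psi(t)=\int_0^t\psi$. Then
\begin{equation*}
\sqrt{V(t)}\leq e^{-\Psi(t)}\sqrt{V(0)}+\int_0^t e^{-(\Psi(t)-\Psi(s))}\sqrt{H(s)}\,ds.
\end{equation*}
By \eqref{2.5}, $\Psi(t)\geq \gamma_0\big((1+t)^{1-\beta'}-1\big)$ with $\gamma_0=C/(1-\beta')$, and $\Psi(t)\to\infty$, so the first term tends to zero.

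\textbf{Step 3: the memory term (main obstacle).} We must show the convolution term tends to zero, and in fact that it is integrable in $t$ for Step 4. We split the integral at $t/2$.

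For $s\leq t/2$, we have $\Psi(t)-\Psi(s)\geq\int_{t/2}^t\psi\geq c\,t^{1-\beta'}$. This piece is therefore bounded by $e^{-ct^{1-\beta'}}\int_0^\infty\sqrt H$, which is finite by \eqref{2.6}.

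For $s\geq t/2$, we drop the exponential kernel and bound this piece by $\int_{t/2}^\infty\sqrt H$. Using \eqref{2.6},
\begin{equation*}
\int_{t/2}^\infty\sqrt H\leq e^{-\gamma(t/2)^{1-\beta'}}\int_0^\infty e^{\gamma s^{1-\beta'}}\sqrt H\,ds.
\end{equation*}
Both pieces decay like $e^{-c' t^{1-\beta'}}$, so
\begin{equation*}
\sqrt{V(t)}\leq C e^{-c''t^{1-\beta'}}.
\end{equation*}
This gives \eqref{2.7}, since $\vert v^i-v^c\vert\leq\sqrt V$. The delicate point is matching the rate from $\psi$ with the weight $\gamma$ in \eqref{2.6}. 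Any positive constants suffice, because $c''$ can be taken as the minimum of the two rates.

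\textbf{Step 4: bounded positions.} We have $\frac{d}{dt}(x^i-x^c)=v^i-v^c$, hence
\begin{equation*}
\vert x^i(t)-x^c(t)\vert\leq\vert x^i(0)-x^c(0)\vert+\int_0^\infty\sqrt{V(s)}\,ds.
\end{equation*}
The integral is finite because $e^{-c''s^{1-\beta'}}$ is integrable on $[0,\infty)$ when $1-\beta'>0$. This proves \eqref{2.8}.

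Proposition \ref{proposition 2.2} is not needed, since every quantity involved is a deviation from the center.
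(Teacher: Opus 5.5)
Your proof is correct and is essentially the paper's argument: the same inequality $\frac{d}{dt}V\le-2\psi V+2\sqrt{V}\sqrt{H}$ (Lemma~\ref{lemma 3.2}), Gr\"onwall for $\sqrt{V}$, decay of the Duhamel term from \eqref{2.6}, and integration of $\sqrt{V}$ to get \eqref{2.8}. The differences are minor: you split the Duhamel integral at $t/2$ where the paper factors out $e^{-c(1+t)^{1-\beta'}}$, which spares you the paper's tacit choice $c\le\gamma$, and you justify differentiating $\sqrt{V}$ via $\sqrt{V+\varepsilon}$, a step the paper omits; also, $\int_{t/2}^{t}\psi\ge c\,t^{1-\beta'}$ holds only for $t$ bounded away from $0$ when $\beta'>0$, which is harmless because small $t$ is absorbed into the constant.
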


\begin{remark}\label{remark 2.4}
In \cite{Brown2003Social}, it is shown that fish share memory through social learning to maintain group structure and synchronous behavior. This suggests that memory differences between individuals should diminish over time to achieve flocking. In our model, this corresponds to the requirement that the memory deviation $H(t)$ decays exponentially as in \eqref{2.6}. Therefore, the flocking phenomenon not only depends on the communication kernel but also on the decay rate of memory deviation.
\end{remark}

After discussing the flocking phenomenon of microstates, we derived the Vlasov type equation using the BBGKY hierarchy \cite{Benedetto1999A,Russo1996Kinetic} and mean field limit \cite{Golse2016On,Poyato2016Euler} to describe the mesoscopic state of memory systems.

Since the memory variable does not increase the effective dimension of the phase space, we assume that the number of agents is large enough to enable us to analyze their distribution functions:
\begin{equation*}
f^{N}=f^{N}(x^{1},v^{1},\cdots,x^{N},v^{N},t),\;\;(x^{i},v^{i})\in~\R^{d}\times~\R^{d}.
\end{equation*}

Then we use the BBGKY hierarchy, based of the Liouville equation \cite{Kennard1938},
\begin{equation}\label{2.9}
\small
\begin{cases}
\partial _{t}f^{N}+\displaystyle\sum_{i=1}^{N}v^{i}\cdot\nabla_{x^{i}}f^{N}+\frac{1}{N}\displaystyle\sum_{i=1}^{N}\nabla_{v^{i}}\cdot\bigl[\displaystyle\sum_{j=1}^{N}(r(x^{i},x^{j})(v^{j}-v^{i})-h^{i}(t))f^{N}\bigl]=0,\\
\frac{dh^{i}}{dt}=x^{i}(t)+v^{i}(t)-\lambda h^{i}(t),\;\;\lambda>0.
\end{cases}
\end{equation}

Consider the marginal distribution $f^{N-1}(x^{1},v^{1},t)$ and $f^{N-2}(x^{1},v^{1},x^{2},v^{2},t)$:
\begin{align*}
f^{N-1}(x^{1},v^{1},t): &=\int_{\R^{2d(N-1)}}f^{N}dx^{2}dv^{2}\cdots dx^{N}dv^{N},\\
f^{N-2}(x^{1},v^{1},x^{2},v^{2},t) :&=\int_{\R^{2d(N-2)}}f^{N}dx^{3}dv^{3}\cdots dx^{N}dv^{N}.
\end{align*}

Integrate $dx^{2}dv^{2}\cdots dx^{N}dv^{N}$ on both sides of \eqref{2.9}. Then we obtain
\begin{equation}\label{2.10}
\partial _{t}f^{N-1}+v^{1}\cdot\nabla_{x^{1}}f^{N-1}+F(x^{1},v^{1},t)-G(x^{1},v^{1},t)=0,
\end{equation}
where
\begin{align*}
F(x^{1},v^{1},t):&=\frac{1}{N}\int_{\R^{2d(N-1)}}\displaystyle\sum_{i=1}^{N}\nabla_{v^{i}}\cdot[\displaystyle\sum_{j=1}^{N}(r(x^{i},x^{j})(v^{j}-v^{i})f^{N}]dx^{2}dv^{2}\cdots dx^{N}dv^{N},\\
G(x^{1},v^{1},t):&=\frac{1}{N}\int_{\R^{2d(N-1)}}(\displaystyle\sum_{i=1}^{N}h^{i}(t)\cdot\displaystyle\sum_{i=1}^{N}\nabla_{v^{i}}f^{N})dx^{2}dv^{2}\cdots dx^{N}dv^{N}.
\end{align*}

We take the mean-field limit $N\to\infty$ to obtain the equation of the mesoscopic state:
\begin{equation}\label{2.11}
\begin{cases}
\partial_{t}f+v^{1}\cdot\nabla_{x^{1}}f+\nabla_{v^{1}}\cdot F(f)-z(t)\cdot\nabla_{v^{1}}f=0,\\
F(f)=\int_{\R^{2d}}r(x^{1},x^{2})(v^{2}-v^{1})gdx^{2}dv^{2},\\
z'(t)=\overline{x}(t)+\overline{v}(t)-\lambda z(t),\;\;\lambda>0,
\end{cases}
\end{equation}
where $\overline{x}(t)=\int_{\R^{2d}}x^{1}f(x^{1},v^{1},t)dx^{1}dv^{1}$, $\overline{v}(t)=\int_{\R^{2d}}v^{1}f(x^{1},v^{1},t)dx^{1}dv^{1}$,\\ $f(x^{1},v^{1},t)=\displaystyle\lim_{N\to\infty}f^{N-1}(x^{1},v^{1},t)$,
$ g(x^{1},v^{1},x^{2},v^{2},t)=\displaystyle\lim_{N\to\infty}f^{N-2}(x^{1},v^{1},x^{2},v^{2},t)$.

Finally, based on the assumption of molecular chaos \cite{Brown2008}, we get
\begin{equation*}
g(x^{1},v^{1},x^{2},v^{2},t)=f(x^{1},v^{1},t)f(x^{2},v^{2},t).
\end{equation*}

For simplicity of notation, we henceforth denote $(x^{1},v^{1})=(x,v)$ and $(x^{2},v^{2})=(y,w)$, then we get the Vlasov-type mean-field model with memory term:
\begin{equation}\label{2.12}
\begin{cases}
\partial_{t}f+v\cdot\nabla_{x}f+\nabla_{v}\cdot [(F(f)-z(t))f]=0,\\
F(f)=\int_{\R^{2d}}r(x,y)(w-v)f(y,w,t)dydw,\\
z'(t)=\overline{x}(t)+\overline{v}(t)-\lambda z(t),\;\;\lambda>0.
\end{cases}
\end{equation}

Here, $f(x,v,t)$ represents the mesoscopic density of the system, and
\begin{equation*}
\overline {x}(t)=\displaystyle\int_{\R^{2d}}xf(x,v,t)dxdv,\;\;\overline {v}(t)=\displaystyle\int_{\R^{2d}}vf(x,v,t)dxdv
\end{equation*}
represent the central mass of the system and the total momentum of system. $z(t)$ is the average memory strength. For the memory communication system of mesoscopic states, we provide a definition of the system solution and prove the existence of the solution.

\begin{definition}For given $T>0$, we say that $(f,z)$ is a weak solution of Vlasov-type equation \eqref{2.12} if the following conditions are satisfied:
\begin{item}
\item[(1)]the initial density $f_{0}\in L_{+}^{1}(\R^{2d})\cap L^{\infty}(\R^{2d})$,
\item[(2)]for all $\eta\in C^{\infty}_{c}(\R^{d}\times \R^{d}\times[0,T])$ with $\eta(x,v,T)=0$,
\begin{equation}\label{2.13}
\small
\int_{R^{2d}}f_{0}\eta(x,v,0)dxdv+\int_{0}^{T}\int_{\R^{2d}}f(\partial_{t}\eta+v\cdot\nabla_{x}\eta+F(f)\cdot\nabla_{v}\eta-z(t)\cdot\nabla_{v}\eta)dxdvdt=0.
\end{equation}
\item[(3)]for every $t\in[0,T]$,
\begin{equation}\label{2.14}
z'(t)=\overline{x}(t)+\overline{v}(t)-\lambda z(t),\;\;z(0)=z_{0},
\end{equation}
with the mesoscopic states defined by
\begin{equation*}
\overline{x}(t)=\int_{\R^{2d}}xf(x,v,t)dxdv,\;\;\overline{v}(t)=\int_{\R^{2d}}vf(x,v,t)dxdv.
\end{equation*}
\end{item}
\end{definition}

\begin{theorem}\label{theorem 2.6}Let $T>0$. Suppose that $f_{0}$ satisfies
\begin{equation*}
f_{0}\in L_{+}^{1}(\R^{2d})\cap L^{\infty}(\R^{2d}),\;\;\vert v\vert^{2}f_{0}\in L^{1}(\R^{2d})\;\;and\;\;z_{0}\in\R^{d}
\end{equation*}
Then there exists a weak solution $(f,z)$ of the Vlasov-type equation \eqref{2.12} in the sense of definition.
\end{theorem}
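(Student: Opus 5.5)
The proof will follow the standard existence argument for kinetic Cucker--Smale equations (as in Karper--Mellet--Trivisa type constructions): regularize the alignment force, solve a linear transport problem by characteristics with a frozen memory variable $z$, close a fixed point, and pass to the limit using uniform moment bounds.

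\textbf{Regularized problem.} For $\varepsilon>0$ we replace $F(f)$ by a truncated field
\begin{equation*}
F_{\varepsilon}(f)=\chi_{\varepsilon}(v)\int_{\R^{2d}}r(x,y)(w-v)\,\chi_{\varepsilon}(w)\,f(y,w,t)\,dydw ,
\end{equation*}
with $\chi_{\varepsilon}$ a smooth cutoff equal to $1$ on $\{\vert v\vert\le 1/\varepsilon\}$. We also mollify $f_{0}$ into a compactly supported $f_{0}^{\varepsilon}$. This field is bounded and Lipschitz in $(x,v)$, because $r$ is bounded by $A$ from \eqref{1.2} and is Lipschitz.

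We work on the space $\mathcal{X}=C([0,T];L^{1}(\R^{2d}))\times C([0,T];\R^{d})$. Given $(\tilde f,\tilde z)\in\mathcal{X}$, we solve the linear equation
\begin{equation*}
\partial_{t}f+v\cdot\nabla_{x}f+\nabla_{v}\cdot[(F_{\varepsilon}(\tilde f)-\tilde z)f]=0
\end{equation*}
by characteristics. This gives $\Vert f(t)\Vert_{L^{\infty}}\le e^{dT\Vert F_\varepsilon\Vert_{Lip}}\Vert f_{0}^{\varepsilon}\Vert_{L^{\infty}}$, preserves mass, and keeps the support compact on $[0,T]$. We then set
\begin{equation*}
z(t)=z_{0}e^{-\lambda t}+\int_{0}^{t}e^{-\lambda(t-s)}\bigl(\overline{x}(s)+\overline{v}(s)\bigr)ds ,
\end{equation*}
where $\overline{x},\overline{v}$ are computed from $f$; they are finite because of the compact support. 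Standard stability estimates for characteristics show that this map is a contraction on a short interval, and iterating in time gives a solution $(f^{\varepsilon},z^{\varepsilon})$ on $[0,T]$.

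\textbf{Uniform estimates.}
\begin{itemize}
\item Nonnegativity and mass conservation: $\Vert f^{\varepsilon}(t)\Vert_{L^{1}}=\Vert f_{0}\Vert_{L^{1}}$.
\item $L^{\infty}$ bound: $\nabla_{v}\cdot F_{\varepsilon}$ is bounded by $CA(1+\Vert f_0\Vert_{L^1})$ uniformly in $\varepsilon$ (the divergence essentially produces $-d\int r f$ plus cutoff terms). Hence $\Vert f^{\varepsilon}(t)\Vert_{L^{\infty}}\le Ce^{CT}$.
\item Second moment: multiplying by $\vert v\vert^{2}/2$ and using the symmetry of $r$ gives the dissipation term $-\frac12\iint r\vert w-v\vert^{2}ff\le0$ (up to harmless cutoff corrections). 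The memory term is bounded by $\vert z\vert\, \vert\overline{v}\vert$, so
\begin{equation*}
\frac{d}{dt}\int\vert v\vert^{2}f\le C\vert z\vert\Bigl(\int\vert v\vert^{2}f\Bigr)^{1/2}.
\end{equation*}
\item First moment in $x$: $\frac{d}{dt}\overline{x}=\overline{v}$, and likewise for $\int\vert x\vert^{2}f$ if $\vert x\vert^{2}f_0\in L^1$. If that moment is not assumed, we only use $\overline{x}$, which is finite once it is finite initially; we will need to add the assumption $\vert x\vert f_{0}\in L^{1}$, or handle it via localization.
\item Memory equation: $\vert z'\vert\le\vert\overline{x}\vert+\vert\overline{v}\vert+\lambda\vert z\vert$.
\end{itemize}
Gronwall's inequality on the coupled system then gives uniform bounds on $\sup_{[0,T]}\bigl(\vert z^{\varepsilon}\vert+\int(1+\vert x\vert+\vert v\vert^{2})f^{\varepsilon}\bigr)$.

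\textbf{Passage to the limit.} By these bounds, $f^{\varepsilon}\rightharpoonup f$ weakly-* in $L^{\infty}$, and weakly in $L^{1}$ by tightness from the moments. The sequence $z^{\varepsilon}$ is equicontinuous (its derivative is bounded), so Arzel\`a--Ascoli gives uniform convergence $z^{\varepsilon}\to z$. Time equicontinuity of $f^{\varepsilon}$ in a negative Sobolev norm follows from the equation. The linear terms in the weak formulation \eqref{2.13} then pass to the limit directly, and so does $z^{\varepsilon}\cdot\nabla_{v}\eta\, f^{\varepsilon}$, since it is a strong limit times a weak limit. Moment convergence of $\overline{x}^{\varepsilon},\overline{v}^{\varepsilon}$ follows from uniform integrability of the higher moments, which gives \eqref{2.14}.

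\textbf{Main obstacle.} The hard step is the nonlinear term $\int f^{\varepsilon}F_{\varepsilon}(f^{\varepsilon})\cdot\nabla_{v}\eta$, which is quadratic in $f^{\varepsilon}$. We will use velocity averaging: since $f^{\varepsilon}$ is bounded in $L^{\infty}\cap L^{1}$ with moments, the averages $\int f^{\varepsilon}\phi(v)dv$ and $\int f^{\varepsilon}w\,dw$ converge strongly in $L^{1}_{loc}$. Hence $\rho^{\varepsilon}=\int f^{\varepsilon}dw$ and $j^{\varepsilon}=\int wf^{\varepsilon}dw$ converge strongly, and $F_{\varepsilon}(f^{\varepsilon})=r*j^{\varepsilon}-v\,r*\rho^{\varepsilon}$ converges strongly locally. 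A strong-times-weak product then passes to the limit. The remaining tail and cutoff errors are controlled by the uniform $\vert v\vert^{2}$ moment.
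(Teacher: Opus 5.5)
Your argument is sound in outline, once the hypothesis discussed in the second paragraph is added, but it takes a genuinely different route from the paper's.

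\textbf{Comparison of the two routes.} The paper neither regularizes nor uses compactness. It runs a linear iteration in which $f^{n}$ solves the transport equation with the frozen field $F(f^{n-1})-z^{n-1}$. It then proves a priori bounds: mass, $|z^{n-1}|$ through the linear system for $\overline{x},\overline{v},z$, $L^{\infty}$ along characteristics, kinetic energy, and a weighted $L^{\infty}_{s}$ bound. Finally it shows that $(1+|v|^{2})^{s}f^{n}$ is Cauchy in $L^{\infty}(0,T;L^{1})$ for small $T$, so every term, including the quadratic one, passes to the limit by strong convergence; a continuation argument handles general $T$.

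That route avoids compactness tools and could in principle yield stability and uniqueness. Its cost is that it needs $(1+|v|^{2})^{s}f_{0}\in L^{\infty}$ and velocity moments beyond $|v|^{2}$, which are not among the hypotheses. Your regularization-plus-compactness argument uses only mass, $L^{\infty}$, the $|v|^{2}$-moment and the first $x$-moment, which is exactly what the hypotheses propagate. It also works on all of $[0,T]$ at once, at the price of giving existence only.

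\textbf{Two simplifications.} Your ``main obstacle'' is easier than you make it. Since $r$ is bounded and Lipschitz, the time equicontinuity you derive, together with tightness and the uniform $|v|^{2}$-moment, gives narrow convergence of $f^{\varepsilon}(t)$ for every $t$ along one subsequence. Hence $F_{\varepsilon}(f^{\varepsilon})\to F(f)$ locally uniformly in $(x,v)$ for each $t$, with $t$-uniform bounds. The quadratic term is then a strong limit times a weak limit, and velocity averaging is not needed.

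Likewise, because your cutoffs are symmetric, momentum evolves exactly by $-M_{0}z$. So $(\overline{x}^{\varepsilon},\overline{v}^{\varepsilon},z^{\varepsilon})$ solves the closed linear system \eqref{5.5}. Identifying the limits with the moments of $f$ requires uniform integrability of $|x|f^{\varepsilon}(t)$. This follows from $|X_{t}|\le|x_{0}|+\int_{0}^{t}|V_{s}|\,ds$ and the $|v|^{2}$ bound, not from ``higher moments'' in $x$, which you do not have.

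\textbf{The position moment.} You should simply add $|x|f_{0}\in L^{1}(\R^{2d})$ to the hypotheses; ``localization'' cannot replace it. Condition (3) of the definition requires $\overline{x}(t)=\int xf\,dxdv$ to be a finite vector for every $t$, including $t=0$. If $\int|x|f_{0}=\infty$, then given your second-moment bound,
$\int|x|f(t)\ge\int|x|f_{0}-\int_{0}^{t}\int|v|f\,dxdv\,ds=\infty$ for all $t$. So the theorem as stated is missing this assumption. The paper's proof also uses it tacitly, through the term $|x(0)|M_{0}$ in \eqref{4.4}.
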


Similar to the discussion of microscopic level, we discuss the momentum and energy of mesoscopic level.

Let
\begin{equation*}
M_{0}=\int_{\R^{2d}}fdxdv
\end{equation*}
be the total mass. We find that by discussing  the memory decay rate $\lambda$ of mesoscopic systems , we can obtain the minimum value form of the communication kernel.

\begin{proposition}\label{proposition 2.7}Let $(f,z)$ be the global solution in Theorem \ref{theorem 2.6}. Then there exist a constant $\overline k_{0}$, such that for any $(x(t),y(t))\in\R^{d}\times\R^{d}$, $\psi(t)=\displaystyle\min_{(x(t),y(t))} r(x-y)$ and $\lambda>1$ satisfies
\begin{equation}\label{2.15}
\psi(t)\geq
\frac{A}{\overline k_{0}}(1+t^{2}+t^{6})^{-\beta},\;\;\overline k_{0}>0.
\end{equation}
\end{proposition}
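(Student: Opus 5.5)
The lower bound \eqref{2.15} follows once we control the spatial spread of the support of $f(t)$ by a polynomial $R(t)$ with $R(t)^{2}\lesssim 1+t^{2}+t^{6}$. Since $r(x,y)=A(1+\vert x-y\vert^{2})^{-\beta}$ is non-increasing in $\vert x-y\vert$, for any $x,y$ in the support of $f(t)$ we get $\psi(t)\geq A(1+(2R(t))^{2})^{-\beta}$. The constant $\overline k_{0}$ then absorbs the numerical factors. The plan is to work along the characteristics of \eqref{2.12},
\begin{equation*}
\dot X = V,\qquad \dot V = F(f)(X,V,t)-z(t),
\end{equation*}
and to bound $\vert X(t)\vert$ and $\vert V(t)\vert$ for characteristics starting in the support of $f_{0}$. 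We assume this support is compact, say contained in $B_{R_{0}}\times B_{R_{0}}$.

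First, we bound the memory variable $z$. We test \eqref{2.13} with $1$, $x$ and $v$ (after the usual cutoff argument). This shows that $M_{0}$ is conserved. By the antisymmetry of $r(x,y)(w-v)$ under $(x,v)\leftrightarrow(y,w)$, we get $\overline{x}'=\overline{v}$ and $\overline{v}'=-M_{0}z$. Together with \eqref{2.14}, the triple $(\overline x,\overline v,z)$ satisfies a closed linear ODE system, exactly analogous to the central system in Proposition \ref{proposition 2.2}. Repeating that argument for $\lambda>1$ gives $\vert\overline x\vert+\vert\overline v\vert+\vert z\vert\leq Ce^{-\delta t}$. In particular $\sup_{t}\vert z(t)\vert\leq C_{z}$.

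Second, we bound the velocity support. Define $R_{v}(t)=\max\{\vert v\vert:(x,v)\in\operatorname{supp}f(t)\}$. Along a characteristic,
\begin{equation*}
\frac{d}{dt}\frac{\vert V\vert^{2}}{2}=\int r(X,y)(w\cdot V-\vert V\vert^{2})f\,dy\,dw - z\cdot V .
\end{equation*}
At a point achieving $R_{v}$, the first term is $\leq 0$, because $w\cdot V\leq R_{v}^{2}$ on the support. This yields $\frac{d}{dt}R_{v}\leq \vert z(t)\vert$, hence $R_{v}(t)\leq R_{0}+C_{z}t$. (Using the exponential decay of $z$ one could even get a uniform bound; the cruder linear growth is what produces the high powers in \eqref{2.15}, so I would follow that route to match the stated form.) A less careful version bounds $F(f)$ by $A(\vert w\vert+\vert V\vert)M_{0}$ and integrates the resulting Gronwall inequality only up to polynomial order. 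This is likely where the $t^{3}$ term in $R_{x}$ arises.

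Third, we integrate again: $R_{x}(t)\leq R_{0}+\int_{0}^{t}R_{v}(s)ds\leq R_{0}+R_{0}t+C t^{2}$, or, with the cruder velocity estimate, $\lesssim 1+t+t^{3}$. Then $\vert x-y\vert^{2}\leq 4R_{x}^{2}\leq k(1+t^{2}+t^{6})$ on the support, and with $\overline k_{0}:=\max(1,k)^{\,}$ we have
\begin{equation*}
1+\vert x-y\vert^{2}\leq \overline k_{0}^{1/\beta}(1+t^{2}+t^{6}),
\end{equation*}
which gives \eqref{2.15}.

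The main obstacle is making the support/characteristic argument rigorous for weak solutions: the characteristic flow must be well defined, and $\operatorname{supp} f(t)$ must be transported by it. I would handle this through the regularized approximations used in Theorem \ref{theorem 2.6}, where the flow is Lipschitz, derive the uniform bounds there, and pass to the limit.
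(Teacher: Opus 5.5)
Your argument is correct, provided $f_0$ has compact support. The statement needs this anyway: the minimum of $r$ over an unbounded support is $0$ for $\beta>0$, and the paper's own $\overline k_0$ depends on $x(0),v(0)$. Your route is different from the paper's.

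The paper works along a single characteristic, as in Lemma~\ref{lemma 5.4}. It discards the dissipative part $-v\int r f\,dy\,dw$ of $F(f)$ and bounds the remaining part by $A\sqrt{M_0M_2(t)}$, using the kinetic energy estimate of Lemma~\ref{Lemma 4.1}. Adding $\int_0^t|z|$ and integrating crudely gives $|v(t)|\le|v(0)|+\zeta t^2$ and $|x(t)|\le|x(0)|+|v(0)|t+\zeta t^3$. That is where $t^6$ comes from. Your aside about a Gronwall argument with a $+AM_0|V|$ term is not the source: that would give exponential growth, not polynomial.

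You instead apply a maximum principle to the velocity support, where the alignment term has a favourable sign. So you only need $\sup_t|z(t)|<\infty$ from the closed linear system for $(\overline x,\overline v,z)$, and you never use $M_2$. That system's Routh--Hurwitz condition $\lambda M_0>M_0$ is indeed $\lambda>1$ for every $M_0>0$.

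This buys a sharper result. You get $\psi(t)\gtrsim(1+t^2+t^4)^{-\beta}$, which implies \eqref{2.15} because $t^4\le t^2+t^6$. Using $\int_0^\infty|z|\,dt<\infty$, you even get $\psi(t)\gtrsim(1+t)^{-2\beta}$. So the exponent $6$ is an artifact of the crude estimates, not an effect of memory; fed into the proof of Theorem~\ref{theorem 2.9}, your bound would give the classical range $\beta\le\frac12$.

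Two small repairs are needed. First, take $\overline k_0=(1+k)^{\beta}$; your $\max(1,k)^{1/\beta}$ formulation fails for $k=1$ and is undefined for $\beta=0$. Second, justify support propagation directly for the limit rather than through the iterates of \eqref{4.1}. Those iterates' $z^{n-1}$ do not satisfy the closed moment system and are bounded only on $[0,T]$. For fixed $(f,z)$, the field $(v,F(f)-z)$ is locally Lipschitz with linear growth, so $f(t)$ is the push-forward of $f_0$ under its flow by uniqueness for the linear continuity equation.
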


\begin{remark}\label{remark 2.8}
In the standard C-S model \cite{Tadmor2017From}, when mesoscopic asymptotic flocking occurs, the minimum value form of the communication kernel is
\begin{equation*}
\psi(t)\geq\frac{A}{\overline\kappa}(1+t^{2}+t^{4})^{-\beta},\;\;\overline\kappa>0.
\end{equation*}
With the introduction of memory terms, we can still obtain a form similar to the standard C-S model.
\end{remark}

Finally, we define the energy fluctuation function of the system
\begin{equation}\label{2.16}
u_{c}(t):=\frac{1}{M_{0}}\int_{\R^{2d}}vf(x,v,t)dxdv,\;\;\Lambda[f(t)]:=\int_{\R^{2d}}\vert v-u_{c}\vert^{2}f(x,v,t)dxdv.
\end{equation}

\begin{theorem}\label{theorem 2.9}[Unconditional flocking] Let $(f,z)$ be the global solution of the Vlasov-type flocking equation \eqref{2.12}. When $\lambda>1$, the decay of its energy fluctuation around the mean bulk velocity $u_{c}$, is given as following
\begin{equation}\label{2.17}
\Lambda[f(t)]\leq
\begin{cases}
\Lambda[f_{0}]e^{-\gamma_{1}t^{1-6\beta}},\;\;\;\;\beta\in[0,\frac{1}{6}),\\
\Lambda[f_{0}](1+t)^{-\gamma_{2}},\;\;\;\;\beta=\frac{1}{6},
\end{cases}
\end{equation}
where $\gamma_{1}$ and $\gamma_{2}$ are positive constants.
\end{theorem}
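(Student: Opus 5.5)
We compute $\frac{d}{dt}\Lambda[f(t)]$ from the weak formulation \eqref{2.13}, use the symmetry of $r$ to obtain a dissipation term controlled by $\psi(t)$, and close with a Gr\"onwall argument whose exponent is dictated by the lower bound \eqref{2.15} of Proposition \ref{proposition 2.7}.

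\textbf{Step 1: conservation laws.} Testing \eqref{2.13} with $\eta=1$, $\eta=v$ and $\eta=|v|^{2}$ (after a standard cutoff/approximation, justified by $|v|^{2}f_{0}\in L^{1}$ and the construction in Theorem \ref{theorem 2.6}) gives three identities. The mass $M_{0}$ is conserved. The momentum satisfies $\frac{d}{dt}(M_{0}u_{c})=-M_{0}z(t)$, because $\iint r(x,y)(w-v)f(x,v)f(y,w)=0$ by antisymmetry. For the energy,
\begin{equation*}
\frac{d}{dt}\int|v|^{2}f\,dxdv=-\iint r(x,y)|w-v|^{2}f(x,v)f(y,w)\,dxdvdydw-2z(t)\cdot\int vf\,dxdv .
\end{equation*}
Writing $\Lambda=\int|v|^{2}f-M_{0}|u_{c}|^{2}$, the memory contributions cancel: $-2z\cdot M_{0}u_{c}-2M_{0}u_{c}\cdot u_{c}'=0$, since $u_{c}'=-z$. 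The memory term $z(t)$ is spatially uniform and acts only as a common force, so it drops out of the fluctuation entirely. Hence
\begin{equation*}
\frac{d}{dt}\Lambda[f(t)]=-\iint r(x,y)|w-v|^{2}f(x,v,t)f(y,w,t)\,dxdvdydw .
\end{equation*}

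\textbf{Step 2: dissipation bound.} Bound $r(x,y)\geq\psi(t)$ on the support of $f(t)$. The identity
\begin{equation*}
\iint|w-v|^{2}ff=2M_{0}\Lambda[f]
\end{equation*}
then gives $\frac{d}{dt}\Lambda\leq-2M_{0}\psi(t)\Lambda$.

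\textbf{Step 3: Gr\"onwall.} Insert \eqref{2.15}, valid for $\lambda>1$. Since $1+t^{2}+t^{6}\leq 3(1+t)^{6}$, we get $\psi(t)\geq c(1+t)^{-6\beta}$ with $c=A3^{-\beta}/\overline k_{0}$. Integrating,
\begin{equation*}
\Lambda[f(t)]\leq\Lambda[f_{0}]\exp\Bigl(-2M_{0}c\int_{0}^{t}(1+s)^{-6\beta}ds\Bigr).
\end{equation*}
For $\beta<\frac16$ the integral equals $\frac{(1+t)^{1-6\beta}-1}{1-6\beta}\geq\frac{t^{1-6\beta}}{1-6\beta}$, since $(1+t)^{a}-1\geq t^{a}$ for $a\in(0,1]$. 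This yields the first case with $\gamma_{1}=2M_{0}c/(1-6\beta)$. For $\beta=\frac16$ the integral is $\ln(1+t)$, giving the algebraic rate with $\gamma_{2}=2M_{0}c$.

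\textbf{Main obstacle.} The delicate step is Step 2's use of $\psi$: one needs $r\geq\psi(t)$ uniformly on the support of $f(t)$, i.e.\ a bound on the spatial diameter of the support growing like $(1+t^{2}+t^{6})^{1/2}$. This is exactly what Proposition \ref{proposition 2.7} supplies, and I will invoke it as stated. A secondary technical point is justifying the test functions $v,|v|^{2}$, which are not compactly supported. I would handle it by truncation, using the second-moment bound that follows from the energy identity itself together with the exponential bound on $z$ analogous to Proposition \ref{proposition 2.2} for $\lambda>1$.
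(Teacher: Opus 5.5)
Your argument follows the paper's proof. It uses the dissipation identity of Lemma \ref{lemma 5.5}, then $r\ge\psi(t)$, Gr\"onwall, and Proposition \ref{proposition 2.7}. You derive the identity by expanding $\Lambda=\int|v|^2f-M_0|u_c|^2$ and using $u_c'=-z$; the paper instead kills the $z$-term directly via $\int(v-u_c)f=0$. Your bookkeeping is more careful than the paper's in two places. First, $\iint|w-v|^2ff=2M_0\Lambda$ gives the rate $2M_0\psi(t)$, whereas the paper writes $M_0\psi(t)$ and then drops $M_0$. Second, $1+t^2+t^6\le 3(1+t)^6$ gives a lower bound on $\psi$ valid for all $t\ge0$, whereas the paper's $\psi(t)\ge\frac{A}{\overline\nu_0}t^{-6\beta}$ holds only for $t>1$. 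Your cases $\beta=0$ and $\beta=\frac16$ (where $\int_0^t(1+s)^{-1}ds=\ln(1+t)$) are correct as written.

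The step that fails is the last one for $\beta\in(0,\frac16)$: the inequality $(1+t)^a-1\ge t^a$ is false for $a\in(0,1)$. Since $s\mapsto s^a$ is concave and vanishes at $0$, it is subadditive, so $(1+t)^a\le 1+t^a$. For example, $a=\frac12$, $t=1$ gives $\sqrt2-1<1$. Using only $(1+t)^a\ge t^a$, your computation actually yields
\[
\Lambda[f(t)]\le\Lambda[f_0]\exp\bigl(-\gamma_1\bigl((1+t)^{1-6\beta}-1\bigr)\bigr)\le e^{\gamma_1}\Lambda[f_0]e^{-\gamma_1 t^{1-6\beta}} .
\]
No better argument can remove the prefactor, because the bound without it is false near $t=0$. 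From $r\le A$ and the same identity, $\frac{d}{dt}\Lambda\ge-2AM_0\Lambda$, so $\Lambda[f(t)]\ge\Lambda[f_0]e^{-2AM_0t}$. When $\Lambda[f_0]>0$, this contradicts $\Lambda[f(t)]\le\Lambda[f_0]e^{-\gamma_1t^{1-6\beta}}$ for small $t>0$, since $t^{1-6\beta}\gg t$ there.

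The paper's proof has the same hidden defect. Integrating its bound from $t=1$ gives $\int_0^t\psi\ge\gamma_1(t^{1-6\beta}-1)$ with the paper's $\gamma_1$, which is exactly the factor $e^{\gamma_1}$ that is then silently dropped. So the first case of \eqref{2.17} should carry a multiplicative constant (or be written as $\exp(-\gamma_1((1+t)^{1-6\beta}-1))$), and your proof should conclude that instead.
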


\begin{remark}\label{remark 2.10}
In the standard C-S model \cite{Tadmor2017From}, the critical value of $\beta$ is $\frac{1}{2}$. With the addition of memory terms, we find that the range of communication strength $\beta$ become smaller. The flocking phenomenon is difficult to occurrence. Memory makes flocking harder.
\end{remark}

\begin{remark}\label{remark 2.11}
We find that the occurrence of flocking phenomenon is not only depend on $\beta$, but also depend on memory decay rate $\lambda$. Through our analysis, we find that the memory decay should strong enough to keep the flocking, without any addition lower-bound assumptions about the communication kernel. And the condition $\lambda>1$ is sharp. When $\lambda\leq1$, the center system loses stability, the flocking phenomenon will not occurrence in mesoscopic level.
\end{remark}

The main contributions of this paper are summarized as follows. First, we construct a multi particle dynamics model with biological memory mechanism, which extends the classical Cucker-Smale model. The ODE system \eqref{1.1} involves position, velocity and memory states. This model introduces biological memory, which directly acts on the individual's acceleration, so as to more comprehensively simulate the biological decision-making process. The kinetic equation $\frac{dh}{dt}=x(t)+v(t)-\lambda h(t)$ of the memory $h(t)$ has a clear biological explanation as \cite{Xie2020Convolution}. The individual current position $x(t)$ and current velocity $v(t)$ are used as the continuous input memory. The term $-\lambda h(t)$ describes the natural forgetting process of memory, and its decay rate is controlled by parameter $\lambda$. This structure can continuously reflect the impact of individual path history on current behavior. Second, in microscopic level, we find that the flocking behavior is difficult to occurrence. It requires the communication kernel has a polynomial lower bound form. And the memory deviation needs to satisfy the integrability condition as \cite{Brown2003Social}. Third, in mesoscopic level, memory makes flocking harder. We find that compared with classical C-S model, the range of communication strength $\beta$ become smaller. With the addition of memory term, the occurrence of flocking phenomenon requires memory decay strong enough without the extra conditions of $\psi(t)$ and $H(t)$.

\section{The memory system of microscopic states}

We simplify symbols of the system, let
\begin{equation*}
\small x(t)=(x^{1}(t),\cdots,x^{N}(t))^{T},\;v(t)=(v^{1}(t),\cdots,v^{N}(t))^{T},\;h(t)=(h^{1}(t),\cdots,h^{N}(t))^{T},
\end{equation*}
and the initial values are
\begin{equation*}
\small x(0)=(x^{1}(0),\cdots,x^{N}(0))^{T},\;v(0)=(v^{1}(0),\cdots,v^{N}(0))^{T},\;h(0)=(h^{1}(0),\cdots,h^{N}(0))^{T}.
\end{equation*}
\begin{lemma}\label{Lemma 3.1}Let $(x(0), v(0), h(0))\in\R^{N\times d}\times\R^{N\times d}\times\R^{N\times d}$ be the initial value of $(x,v,h)$, there exists a unique solution $(x^{i}(t),v^{i}(t),h^{i}(t))$, $i=1,2,\cdots,N$ of system \eqref{1.1}, defined for all $t\in(0,+\infty)$.
\end{lemma}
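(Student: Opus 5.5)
We treat \eqref{1.1} as an autonomous first-order ODE system $\dot Y=\Phi(Y)$ on $\R^{3Nd}$ for the state vector $Y(t)=(x(t),v(t),h(t))$. The proof will rest on two ingredients: the Cauchy--Lipschitz theorem for local existence and uniqueness, and an a priori growth bound that excludes blow-up in finite time.

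\textbf{Local well-posedness.} The components of $\Phi$ are $v^{i}$, then $\frac{1}{N}\sum_{j}r(x^{i},x^{j})(v^{j}-v^{i})-h^{i}$, and finally $x^{i}+v^{i}-\lambda h^{i}$.
\begin{itemize}
\item[(i)] Since $r$ is Lipschitz and bounded by $A$ by \eqref{1.2}, each product $r(x^{i},x^{j})(v^{j}-v^{i})$ is locally Lipschitz: on a ball $|Y|\le R$ one has
$$|r(a)u-r(b)w|\le A|u-w|+\mathrm{Lip}(r)\,|a-b|\,|w|.$$
\item[(ii)] All remaining terms of $\Phi$ are linear.
\end{itemize}
Hence $\Phi$ is locally Lipschitz, and Cauchy--Lipschitz gives a unique maximal solution on some interval $[0,T^{*})$.

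\textbf{Global existence.} Set $E(t)=\sum_{i}\bigl(|x^{i}|^{2}+|v^{i}|^{2}+|h^{i}|^{2}\bigr)$ and differentiate along solutions.
\begin{itemize}
\item[(i)] The position equation contributes $2\sum_{i}x^{i}\cdot v^{i}\le E$.
\item[(ii)] The memory equation contributes $2\sum_{i}h^{i}\cdot(x^{i}+v^{i})-2\lambda|h|^{2}\le 2E$.
\item[(iii)] The velocity equation contributes $-2\sum_{i}v^{i}\cdot h^{i}\le E$, together with the alignment term
$$\frac{2}{N}\sum_{i,j}r(x^{i},x^{j})\,v^{i}\cdot(v^{j}-v^{i}).$$
By the symmetry of $r$, the alignment term equals $-\frac{1}{N}\sum_{i,j}r(x^{i},x^{j})|v^{j}-v^{i}|^{2}\le 0$. (Alternatively, it can be bounded crudely by $4A|v|^{2}$ using $r\le A$.)
\end{itemize}
Altogether $E'(t)\le C E(t)$ with $C=4+4A$. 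Gronwall's inequality then gives $E(t)\le E(0)e^{Ct}$, so the solution stays bounded on every finite interval. By the standard continuation criterion, $T^{*}=\infty$, which establishes global existence. Uniqueness on $[0,\infty)$ is inherited from local uniqueness.

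\textbf{Consistency with the convolution form.} Finally, the memory equation is linear in $h^{i}$ with continuous forcing $x^{i}+v^{i}$. Variation of constants therefore shows that the solution satisfies \eqref{1.3}.

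The only step requiring care is the local Lipschitz estimate on the nonlinear interaction term: $r$ is bounded and Lipschitz but multiplies the unbounded factor $v^{j}-v^{i}$, so the Lipschitz constant is only local. This is exactly why the global Gronwall bound is needed to rule out finite-time blow-up.
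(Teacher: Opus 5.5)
Your proposal is correct and follows the same route as the paper: the vector field is locally Lipschitz, Cauchy--Lipschitz gives a unique local solution, and the solution is then continued to all times. Your Gronwall bound $E'(t)\le CE(t)$ supplies the a priori non-blow-up estimate that the paper's brief appeal to the ``extension theorem'' leaves implicit. You also correctly note that the Lipschitz constant is only local, a point the paper glosses over.
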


\begin{proof}
The existence of solutions for the above ordinary differential systems, similar to the existence of solutions for standard C-S systems, can be obtained through fixed point theorems.

Write the system in autonomous ODE form
\begin{equation*}
\frac{dY}{dt}=F(Y,t)=
\begin{bmatrix}
v\\
R-h\\
x+v-\lambda h
\end{bmatrix},
\end{equation*}
where \small$Y=[x(t),v(t),h(t)]$ and \small$R=(\frac{1}{N}\displaystyle\sum_{j=1}^{N}r(x^{1},x^{j})(v^{j}-v^{1}),\cdots,\frac{1}{N}\displaystyle\sum_{j=1}^{N}r(x^{N},x^{j})(v^{j}-v^{N}))^{T}$.

Similar to the standard C-S model, since we have added a linear term, we do not alter the Lipschitz property of $F(Y,t)$. Combined with the extension theorem, we can obtain the existence and uniqueness of the solution for ordinary differential system (\ref{1.1}).
\end{proof}

\subsection{Proof of Proposition \ref{proposition 2.1}}

The proof of the proposition is relatively simple, mainly using Cauchy-Schwarz inequalities.

By using \eqref{1.2}, we have
\begin{equation*}
\displaystyle\sum_{1\leq i,j\leq N}r(x^{i},x^{j})(v^{j}-v^{i})=0.
\end{equation*}

Then we have

\begin{equation*}
\begin{split}
\frac{d}{dt}m_{1}(t)  =\frac{1}{N}\displaystyle\sum_{1\leq i,j\leq N}r(x^{i},x^{j})(v^{j}-v^{i})-\displaystyle\sum_{i=1}^{N}h^{i} = -\displaystyle\sum_{i=1}^{N}h^{i} = -Nh^{c}(t).
\end{split}
\end{equation*}

Moreover, we have
\begin{equation*}
\displaystyle\sum_{1\leq i,j\leq N}r(x^{i},x^{j})v^{i}\cdot (v^{j}-v^{i})=-\displaystyle\sum_{1\leq i,j\leq N}r(x^{i},x^{j})v^{j}\cdot (v^{j}-v^{i}).
\end{equation*}

Then the kinetic energy dissipation is as following
\begin{equation*}
\begin{split}
\frac{d}{dt}m_{2}(t) & =\displaystyle\sum_{i=1}^{N}v^{i}\cdot \frac{d}{dt}v^{i}=-\frac{1}{2N}\displaystyle\sum_{1\leq i,j\leq N}r(x^{i},x^{j})(v^{i}-v^{j})^{2}-\displaystyle\sum_{i=1}^{N}v^{i}\cdot h^{i}\\
&\leq-\displaystyle\sum_{i=1}^{N}v^{i}\cdot h^{i}=-\displaystyle\sum_{i=1}^{N}(v^{i}-v^{c})(h^{i}-h^{c})-Nv^{c}\cdot h^{c}\\
&\leq\sqrt{V(t)}\sqrt{H(t)}-Nv^{c}\cdot h^{c}.
\end{split}
\end{equation*}

\subsection{Proof of Proposition \ref{proposition 2.2}}

From the definition of the center system \eqref{2.1} and system \eqref{1.1}, we obtain
\begin{equation}\label{3.1}
\begin{cases}
\frac{dx^{c}}{dt}=v^{c},\\
\frac{dv^{c}}{dt}=-h^{c},\\
\frac{dh^{c}}{dt}=x^{c}+v^{c}-\lambda h^{c}.
\end{cases}
\end{equation}

Therefore, we can obtain the characteristic equation about $x^{c}(t)$
\begin{equation}\label{3.2}
\mu^{3}+\lambda\mu^{2}+\mu+1=0.
\end{equation}

By using Routh-Hurwitz criterion, when $\lambda>1$ the solution of the characteristic equation is
\begin{equation*}
\mu_{1}<0,\;\;\mu_{2}=\alpha+i\beta,\;\;\mu_{3}=\alpha-i\beta,
\end{equation*}
where $\alpha<0$ and $\beta>0$.

Then we can obtain the solution of $x^{c}(t)$
\begin{equation}\label{3.3}
x^{c}(t)=C_{1}e^{\mu_{1}t}+e^{\alpha t}(C_{2}\cos(\beta t)+C_{3}\sin(\beta t)),
\end{equation}
where $\mu_{1}<0$, $\alpha<0$, $\beta>0$, $C_{1}$, $C_{2}$, $C_{3}$ are constants.

Similarly, it can be concluded that $v^{c}(t)$ and $h^{c}(t)$ have the same form.

Hence, there exist constants $C>0$, $\delta>0$ such that
\begin{equation}\label{3.5}
\vert x^{c}(t)\vert+\vert v^{c}(t)\vert+\vert h^{c}(t)\vert\leq Ce^{-\delta t},\;\;t>0.
\end{equation}\hfill$\square$

\subsection{Flocking process}
\begin{lemma}\label{lemma 3.2}Let $(x^{i}(t),v^{i}(t),h^{i}(t))$ be the solution of the system \eqref{1.1}. Then $V(t)$ satisfies the differential inequality
\begin{equation}\label{3.6}
\frac{d}{dt}V(t)\leq -2\psi(t)V(t)+2\sqrt{V(t)}\sqrt{H(t)}.
\end{equation}
\end{lemma}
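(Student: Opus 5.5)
We differentiate $V(t)=\sum_{i}|v^{i}-v^{c}|^{2}$ directly along solutions of \eqref{1.1}, using the centre dynamics \eqref{3.1}. Since $\frac{d}{dt}v^{c}=-h^{c}$, we have
\begin{equation*}
\frac{d}{dt}(v^{i}-v^{c})=\frac{1}{N}\sum_{j=1}^{N}r(x^{i},x^{j})(v^{j}-v^{i})-(h^{i}-h^{c}).
\end{equation*}
Hence
\begin{equation*}
\frac{d}{dt}V(t)=\frac{2}{N}\sum_{i,j}r(x^{i},x^{j})(v^{i}-v^{c})\cdot(v^{j}-v^{i})-2\sum_{i}(v^{i}-v^{c})\cdot(h^{i}-h^{c}).
\end{equation*}
We treat the communication term and the memory term separately.

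\emph{Communication term.} Write $\tilde v^{i}=v^{i}-v^{c}$, so that $v^{j}-v^{i}=\tilde v^{j}-\tilde v^{i}$. By the symmetry \eqref{1.2}, exchanging $i$ and $j$ as in the proof of Proposition \ref{proposition 2.1} gives
\begin{equation*}
\frac{2}{N}\sum_{i,j}r(x^{i},x^{j})\tilde v^{i}\cdot(\tilde v^{j}-\tilde v^{i})=-\frac{1}{N}\sum_{i,j}r(x^{i},x^{j})|\tilde v^{i}-\tilde v^{j}|^{2}.
\end{equation*}
Since $r(x^{i},x^{j})\ge\psi(t)\ge0$ for every pair, this is at most $-\frac{\psi(t)}{N}\sum_{i,j}|\tilde v^{i}-\tilde v^{j}|^{2}$. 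We then use the identity $\sum_{i,j}|\tilde v^{i}-\tilde v^{j}|^{2}=2N\sum_i|\tilde v^{i}|^{2}-2|\sum_i\tilde v^{i}|^{2}$. Because $\sum_i\tilde v^{i}=0$, this equals $2NV(t)$, so the communication term is bounded above by $-2\psi(t)V(t)$.

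\emph{Memory term.} By Cauchy--Schwarz in $\R^{dN}$,
\begin{equation*}
-2\sum_i(v^{i}-v^{c})\cdot(h^{i}-h^{c})\le 2\sqrt{V(t)}\sqrt{H(t)}.
\end{equation*}
Adding the two bounds gives \eqref{3.6}.

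The only delicate step is the symmetrization together with the variance identity, which is what produces the sharp constant $2\psi$. It depends on the symmetry of $r$ in \eqref{1.2} and on the vanishing mean of the fluctuations $\tilde v^{i}$. Everything else is routine algebra.
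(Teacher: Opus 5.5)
Your proof is correct and follows the same route as the paper. You differentiate $V$, symmetrize the alignment term using $r(x^i,x^j)=r(x^j,x^i)$, bound it by $\psi(t)$, and apply Cauchy--Schwarz to the memory term with $h^i-h^c$. The differences are cosmetic: you obtain $h^i-h^c$ directly from $\frac{d}{dt}v^c=-h^c$ (the paper inserts $h^c$ using $\sum_i(v^i-v^c)=0$), and you state explicitly the identity $\frac{1}{N}\sum_{i,j}|v^i-v^j|^2=2V(t)$, which the paper uses without comment.
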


\begin{proof}
By using \eqref{1.2}, we find that
\begin{equation*}
\begin{split}
\frac{d}{dt}V(t) &= -\frac{1}{N}\displaystyle\sum_{1\leq i,j\leq N}r(x^{i},x^{j})(v^{i}-v^{j})^{2}-2\displaystyle\sum_{i=1}^{N}(v^{i}-v^{c})\cdot h^{i}\\
&\leq-2\psi(t)\displaystyle\sum_{i=1}^{N}(v^{i}-v^{c})^{2}-2\displaystyle\sum_{i=1}^{N}(v^{i}-v^{c})\cdot(h^{i}-h^{c})\\
&\leq -2\psi(t)V(t)+2\sqrt{V(t)}\sqrt{H(t)}.
\end{split}
\end{equation*}

Then there exists
\begin{equation*}
\frac{d}{dt}V(t)\leq -2\psi(t)V(t)+2\sqrt{V(t)}\sqrt{H(t)}.
\end{equation*}
\end{proof}

\begin{lemma}\label{lemma 3.3}Let $(x^{i}(t),v^{i}(t),h^{i}(t))$ be the solution of the system \eqref{1.1}. Then $H(t)$ satisfies the differential inequality
\begin{equation}\label{3.7}
\frac{d}{dt}H(t)\leq-2\lambda H(t)+2\sqrt{H(t)}\sqrt{X(t)}+2\sqrt{H(t)}\sqrt{V(t)}.
\end{equation}
\end{lemma}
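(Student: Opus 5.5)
We differentiate $H(t)=\sum_{i=1}^{N}\vert h^{i}-h^{c}\vert^{2}$ directly along solutions of \eqref{1.1}, in the same way as in the proof of Lemma \ref{lemma 3.2}. Subtracting the third equation of the central system \eqref{3.1} from the third equation of \eqref{1.1} gives
\begin{equation*}
\frac{d}{dt}(h^{i}-h^{c})=(x^{i}-x^{c})+(v^{i}-v^{c})-\lambda(h^{i}-h^{c}),\qquad i=1,\cdots,N.
\end{equation*}
Taking the inner product with $2(h^{i}-h^{c})$ and summing over $i$ then yields the identity
\begin{equation*}
\frac{d}{dt}H(t)=-2\lambda H(t)+2\displaystyle\sum_{i=1}^{N}(h^{i}-h^{c})\cdot(x^{i}-x^{c})+2\displaystyle\sum_{i=1}^{N}(h^{i}-h^{c})\cdot(v^{i}-v^{c}).
\end{equation*}

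Next we estimate the two cross terms. Applying the Cauchy--Schwarz inequality first in $\R^{d}$ and then in $\R^{N}$ gives
\begin{equation*}
\displaystyle\sum_{i=1}^{N}(h^{i}-h^{c})\cdot(x^{i}-x^{c})\leq\sqrt{H(t)}\sqrt{X(t)},
\qquad
\displaystyle\sum_{i=1}^{N}(h^{i}-h^{c})\cdot(v^{i}-v^{c})\leq\sqrt{H(t)}\sqrt{V(t)}.
\end{equation*}
Substituting these bounds into the identity gives exactly \eqref{3.7}.

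No step presents a real obstacle, since the memory equation is linear and contains no communication kernel. The only point to check is that the centered derivative is well defined. Because the dynamics of $h^{i}$ are affine in $(x^{i},v^{i},h^{i})$, averaging them produces precisely the third equation of \eqref{3.1}, so all inhomogeneous parts cancel when the center is subtracted. Differentiability of $H$ follows from the global smooth solution given by Lemma \ref{Lemma 3.1}.
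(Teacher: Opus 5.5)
Your proof is correct and follows the paper's argument exactly: you subtract the averaged memory equation to get the centered dynamics, differentiate $H$ to obtain the identity with the $-2\lambda H$ term and two cross terms, and bound each cross term by Cauchy--Schwarz. One minor point: differentiating $H$ only requires the $C^{1}$ regularity given by Lemma \ref{Lemma 3.1}, so calling the solution ``smooth'' claims more than you need, but it is harmless.
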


\begin{proof}
Since we have
\begin{equation*}
\frac{dh^{i}}{dt}=x^{i}+v^{i}-\lambda h^{i},\;\;\frac{dh^{c}}{dt}=x^{c}+v^{c}-\lambda h^{c}.
\end{equation*}

Then we can obtain
\begin{equation*}
\begin{split}
\frac{d}{dt}H(t) =& 2\displaystyle\sum_{i=1}^{N}(h^{i}-h^{c})\cdot\frac{d}{dt}(h^{i}-h^{c})\\
=& 2\displaystyle\sum_{i=1}^{N}(h^{i}-h^{c})\cdot(x^{i}-x^{c})\\
&+2\displaystyle\sum_{i=1}^{N}(h^{i}-h^{c})\cdot(v^{i}-v^{c})-2\lambda\displaystyle\sum_{i=1}^{N}(h^{i}-h^{c})^{2}\\
\leq& -2\lambda H(t)+2\sqrt{H(t)}\sqrt{X(t)}+2\sqrt{H(t)}\sqrt{V(t)}.
\end{split}
\end{equation*}
\end{proof}

\begin{lemma}\label{lemma 3.4}Let $(x^{i}(t),v^{i}(t),h^{i}(t))$ be the solution of the system \eqref{1.1}. Then $X(t)$ satisfies the differential inequality
\begin{equation}\label{3.8}
X(t)\leq 2X_{0}+\frac{1}{2}(\int_{0}^{t}\sqrt{V(s)}ds)^{2},\;\;\;\;t>0.
\end{equation}
\end{lemma}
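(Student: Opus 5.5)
The plan is to control $X$ through a differential inequality for $\sqrt{X}$, driven by $\sqrt{V}$, and then integrate. The decomposition $\sqrt{X}\le\sqrt{X_0}+\int_0^t\sqrt{V}$ is natural. As I explain below, I expect it to yield the constant $2$ in front of the integral term, not the stated $\tfrac12$, and I believe the stated inequality needs to be re-checked.

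\emph{Step 1: differentiate $X$.} Using $\dot x^i=v^i$ and $\dot x^c=v^c$ from \eqref{3.1}, we get
\[
\frac{d}{dt}X(t)=2\sum_{i=1}^N (x^i-x^c)\cdot(v^i-v^c).
\]
The Cauchy--Schwarz inequality, applied exactly as in Lemma \ref{lemma 3.2} and Lemma \ref{lemma 3.3}, then gives $\frac{d}{dt}X\le 2\sqrt{X}\sqrt{V}$.

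\emph{Step 2: pass to $\sqrt{X}$.} Where $X>0$ this reads $\frac{d}{dt}\sqrt{X}\le\sqrt{V}$. To avoid the singularity at $X=0$, work with $\sqrt{X+\varepsilon}$, whose derivative is bounded by $\sqrt{V}$ in the same way, and then let $\varepsilon\to0$. Integrating from $0$ to $t$ gives
\[
\sqrt{X(t)}\le\sqrt{X_0}+\int_0^t\sqrt{V(s)}\,ds .
\]

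\emph{Step 3: square.} Squaring with $(a+b)^2\le 2a^2+2b^2$ produces the factor $2X_0$, as in the statement. It also produces the factor $2$, not $\tfrac12$, in front of $\bigl(\int_0^t\sqrt{V}\bigr)^2$.

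The main obstacle is this constant. No weighted Young inequality $(a+b)^2\le(1+\epsilon)a^2+(1+\epsilon^{-1})b^2$ can put a coefficient below $1$ on the $b^2$ term. The bound of Step 2 is also sharp. Take $X_0=0$ and let each $v^i-v^c$ be a constant vector $w^i$, taken as a pure kinematic test of the $(x,v)$ relation only. Then $x^i-x^c=t\,w^i$, so
\[
X(t)=t^2V=\Bigl(\int_0^t\sqrt{V}\Bigr)^2 .
\]
This exceeds $\tfrac12\bigl(\int_0^t\sqrt{V}\bigr)^2$. The argument does not use the $h$-equation or any property of $r$. So to reach $\tfrac12$ I would first look for extra structure that the statement leaves implicit, such as a different normalisation of $X$ or $V$ (for instance a factor $\tfrac1N$ or $\tfrac12$ in one of them), and recompute Step 1 under it. Absent such structure, I would record the bound $X(t)\le 2X_0+2\bigl(\int_0^t\sqrt{V}\bigr)^2$. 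For the later use in Theorem \ref{theorem 2.3}, only the form of the bound matters, namely $X$ controlled by $X_0$ and $\bigl(\int_0^t\sqrt{V}\bigr)^2$ up to absolute constants.
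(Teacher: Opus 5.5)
Your argument follows the paper's route: differentiate $X$, use Cauchy--Schwarz to get $\frac{d}{dt}X\le 2\sqrt{X}\sqrt{V}$, and integrate the resulting inequality for $\sqrt{X}$. Your $\sqrt{X+\varepsilon}$ regularisation handles $X=0$ more carefully than the paper, which only says ``the solution of this differential inequality yields''. Your doubt about the constant is justified, and the fault is in the paper, not in your proof. The paper asserts $X(t)\le\bigl(\sqrt{X_0}+\tfrac12\int_0^t\sqrt{V(s)}\,ds\bigr)^2$ and then applies $(a+b)^2\le 2a^2+2b^2$. But $\frac{d}{dt}X\le 2\sqrt{X}\sqrt{V}$ gives $\frac{d}{dt}\sqrt{X}\le\sqrt{V}$ with coefficient $1$, exactly as in your Step~2. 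The $\tfrac12$ inside the square is an integration slip, and it is the only source of the $\tfrac12$ in the statement. No normalisation rescues it, since $X$ and $V$ are plain unnormalised sums in the paper.

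The one point to tighten is your sharpness claim. As you acknowledge, the constant-$w^i$ configuration is only a kinematic test, not a solution of system \eqref{1.1}. On its own it shows that Step~2 is sharp for general curves, but it does not refute a lemma about solutions of \eqref{1.1}. The same idea gives a genuine counterexample at short times. Take any solution with $X_0=0$ and $V(0)>0$, for instance $N=2$, $x^1(0)=x^2(0)$, $v^1(0)=-v^2(0)\neq0$. Continuity of $v$ gives $x^i(t)-x^c(t)=t\,(v^i(0)-v^c(0))+o(t)$, hence $X(t)=t^2V(0)+o(t^2)$. On the other hand, $2X_0+\tfrac12\bigl(\int_0^t\sqrt{V(s)}\,ds\bigr)^2=\tfrac12 t^2V(0)+o(t^2)$. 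So the stated inequality fails for all sufficiently small $t>0$. The correct statement is the one you record: $X(t)\le\bigl(\sqrt{X_0}+\int_0^t\sqrt{V(s)}\,ds\bigr)^2\le 2X_0+2\bigl(\int_0^t\sqrt{V(s)}\,ds\bigr)^2$. The correction costs nothing downstream. The proof of Theorem~\ref{theorem 2.3} never uses Lemma~\ref{lemma 3.4}; it bounds $|x^i-x^c|$ directly by $|x^i(0)-x^c(0)|+\int_0^t\sqrt{V(s)}\,ds$.
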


\begin{proof}
By using Cauchy-Schwartz's inequality, we have
\begin{equation*}
\frac{d}{dt}X(t) = 2\displaystyle\sum_{i=1}^{N}(x^{i}-x^{c})\cdot (v^{i}-v^{c})\leq2\sqrt{V(t)}\sqrt{X(t)}.
\end{equation*}

Then the solution of this differential inequality yields
\begin{equation*}
X(t) \leq (\sqrt{X_{0}}+\frac{1}{2}\int_{0}^{t}\sqrt{V(s)}ds)^{2}\leq 2X_{0}+\frac{1}{2}(\int_{0}^{t}\sqrt{V(s)}ds)^{2}.
\end{equation*}
\end{proof}

\begin{remark}\label{remark 3.5}
Through the estimation of Lemma \ref{lemma 3.2}, Lemma \ref{lemma 3.3} and Lemma \ref{lemma 3.4}, we find that $V(t)$, $H(t)$ and $X(t)$ cannot form a closed system. So the occurrence of the flocking phenomenon has become more difficult and requires additional restrictions.
\end{remark}

\noindent\textbf{Proof of Theorem \ref{theorem 2.3}.}

Since we have
\begin{equation*}
\frac{d}{dt}V(t)\leq -2\psi(t)V(t)+2\sqrt{V(t)}\sqrt{H(t)}.
\end{equation*}

Let $Y(t)=\sqrt{V(t)}$, simplify the above equation, we can obtain
\begin{equation}\label{3.9}
\frac{d}{dt}Y(t)\leq -\psi(t)Y(t)+\sqrt{H(t)}.
\end{equation}

Integrate both sides of \eqref{3.9}, we have
\begin{equation}\label{3.10}
Y(t)\leq Y(0)e^{-\int_{0}^{t}\psi(s)ds}+\int_{0}^{t}e^{-\int_{s}^{t}\psi(\tau)d\tau}\sqrt{H(s)}ds.
\end{equation}

Since we have $\psi(t)\geq C(1+t)^{\beta'}$, then we can obtain
\begin{equation*}
Y(t)\leq e^{-c(1+t)^{1-\beta'}}(Y(0)+\int_{0}^{t}e^{c(1+s)^{1-\beta'}}\sqrt{H(s)}ds).
\end{equation*}

By using the memory deviation integrability assumption, we have
\begin{equation}\label{3.10}
Y(t)\leq Ce^{-c(1+t)^{1-\beta'}}.
\end{equation}

Then we can obtain
\begin{equation*}
\lim_{t\to\infty}Y(t)=\lim_{t\to\infty}V(t)=0.
\end{equation*}

Then we have
\begin{equation}\label{3.11}
\lim_{t\to\infty}\vert v^{i}(t)-v^{c}(t)\vert=0,\;\;i=1,\cdots,N.
\end{equation}

And for $\vert x^{i}(t)-x^{c}(t)\vert$, we have
\begin{equation*}
\begin{split}
\vert x^{i}(t)-x^{c}(t)\vert &\leq \vert x^{i}(0)-x^{c}(0)\vert+\int_{0}^{t}\vert v^{i}(s)-v^{c}(s)\vert ds\\
&\leq\vert x^{i}(0)-x^{c}(0)\vert+\int_{0}^{t}Y(s)ds
\end{split}
\end{equation*}

By using \eqref{3.10}, we have
\begin{equation*}
\int_{0}^{\infty}Y(s)ds<\infty.
\end{equation*}

Then there exists
\begin{equation}\label{3.12}
\sup_{t\geq0}\vert x^{i}(t)-x^{c}(t)\vert<\infty,\;\;i=1,\cdots,N.
\end{equation}

\hfill$\square$

\section{The memory system of mesoscopic states}

In the proof of existence, we mainly use the linear iteration methods \cite{Choi2024From}. We first linearize the equation \eqref{2.12}:
\begin{equation}\label{4.1}
\begin{cases}
\partial_{t}f^{n}+v\cdot\nabla_{x}f^{n}+\nabla_{v}\cdot[F(f^{n-1})f^{n}]-z^{n-1}(t)\cdot\nabla_{v}f^{n}=0,\\
(z^{n-1})'=\overline{x}^{n-1}(t)+\overline{v}^{n-1}(t)-\lambda z^{n-1}(t),\;\;\lambda>0.
\end{cases}
\end{equation}
with the initial data and first iteration step:
\begin{equation*}
z^{n}(t)\big\vert_{t=0}=z^{0},\;\;f^{n}(x,v,t)\big\vert_{t=0}=f^{0}(x,v)\;\;\mbox{for all }n\geq 1,
\end{equation*}
\begin{equation*}
f^{0}(x,v,t)=f^{0}(x,v)\;\;\mbox{for }(x,v,t)\in \R^{d}\times\R^{d}\times(0,T).
\end{equation*}

Here
\begin{equation*}
F(f^{n-1})=\int_{\R^{2d}}r(x,y)(w-v)f^{n-1}(y,w,t)dydw,
\end{equation*}
and
\begin{equation*}
\overline{x}^{n-1}(t)=\int_{\R^{2d}}xf^{n-1}(x,v,t)dxdv,\;\;\overline{v}^{n-1}(t)=\int_{\R^{2d}}vf^{n-1}(x,v,t)dxdv.
\end{equation*}

Rewrite the equation and write it in gradient form
\begin{equation}\label{3.2}
\partial_{t}f^{n}+A\cdot\nabla f^{n}=-f^{n}\nabla_{v}\cdot F(f^{n-1}),
\end{equation}
where
\begin{equation*}
\small
A=
\begin{pmatrix}
v\\
F(f^{n-1})-z^{n-1}(t)
\end{pmatrix}.
\end{equation*}

Then by the standard existence theory for transport equations, we know that the equation has a solution $f^{n}\in L^{\infty}(0,T;L^{1}(\R^{d}\times\R^{d}))$.

Next, in order to prove the convergence of the iterative equation, we need to make some prior estimates:\\
$\bullet$($\parallel f^{n}\parallel_{L^{1}}$ estimate):

 We find that integration of equation \eqref{4.1} against $(f^{n})^{p-1}$, we have
\begin{equation*}
\frac{1}{p}\frac{d}{dt}\int_{\R^{2d}}(f^{n})^{p}dxdv=(\frac{1}{p}-1)\int_{\R^{2d}}\nabla_{v}\cdot F(f^{n-1})(f^{n})^{p}dxdv.
\end{equation*}

When $p=1$, we have $\frac{d}{dt}\int_{\R^{2d}}(f^{n})^{p}dxdv=0$. Then $M_{0}(t)=\int_{\R^{2d}}(f^{n})^{p}dxdv=M_{0}$ is a constant, which means that the total mass is conserved in time.\\
$\bullet$($\vert z^{n-1}(t)\vert$ estimate):

Since $(z^{n-1})'=\overline{x}^{n-1}(t)+\overline{v}^{n-1}(t)-\lambda z^{n-1}(t)$, we have
\begin{equation}\label{4.3}
z^{n-1}(t)=z(0)e^{-\lambda t}+\int_{0}^{t}e^{-\lambda(t-s)}(\overline{x}^{n-1}(s)+\overline{v}^{n-1}(s))ds.
\end{equation}

Along the characteristic line of the equation \eqref{4.1}, we can obtain
\begin{equation*}
x'(t)=v(t),\;\;v'(t)=F(f^{n-1})(x,v,t)-z^{n-1}(t).
\end{equation*}

For the estimation of positional moments, we have
\begin{equation}\label{4.4}
\begin{split}
\vert\overline{x}^{n-1}(t)\vert &\leq\int_{\R^{2d}}(\vert x(0)\vert+\int_{0}^{t}\vert v(\tau)\vert d\tau)f^{n-1}dxdv\\
&\leq \vert x(0)\vert M_{0}+\int_{0}^{t}\int_{\R^{2d}}\vert v(\tau)\vert f^{n-1}dxdvd\tau.
\end{split}
\end{equation}

By using the characteristic line equation, we can obtain
\begin{equation}\label{4.5}
\overline{v}^{n-1}(t)\leq\overline{v}^{n-1}(0)+\int_{0}^{t}(C\overline{v}^{n-1}(s)+M_{0}\vert z^{n-1}(s)\vert)ds.
\end{equation}

Using the three inequalities \eqref{4.3}, \eqref{4.4} and \eqref{4.5} obtained above together to obtain an inequality system
\begin{equation}\label{4.6}
\begin{cases}
\vert\overline{x}^{n-1}(t)\vert\leq\vert x(0)\vert M_{0}+\int_{0}^{t}\bar{v}^{n-1}(\tau)d\tau,\\
\overline{v}^{n-1}(t)\leq\overline{v}(0)+\int_{0}^{t}(C\overline{v}^{n-1}(s)+M_{0}\vert z^{n-1}(s)\vert)ds,\\
\vert z^{n-1}(t)\vert\leq\vert z(0)\vert+\int_{0}^{t}\vert\overline{x}^{n-1}(s)\vert ds+\int_{0}^{t}\vert\overline{v}^{n-1}(s)\vert ds.
\end{cases}
\end{equation}

By solving the system of differential inequalities \eqref{4.6}, we can obtain
\begin{equation*}
\overline{v}^{n-1}(t)\leq C.
\end{equation*}

Then we have
\begin{equation}\label{4.7}
\vert z^{n-1}(t)\vert\leq M'
\end{equation}
where $M'$ is a constant consists with $T$, $M_{0}$, $C$, $\lambda$, $x(0)$, $v(0)$ and $z(0)$.
\\
$\bullet$($\vert\vert f^{n}\vert\vert_{L_{\infty}}$ estimate):

For the $\vert\vert f^{n}\vert\vert_{L_{\infty}}$estimate, we introduce the backward characteristic line method.

Let
\begin{equation*}
Z^{n}(r):=(X^{n}(r),V^{n}(r)):=(X^{n}(r;t,x,v),V^{n}(r;t,x,v)),
\end{equation*}
which solves
\begin{equation}\label{4.8}
X'(r)=V^{n}(r),\;\;V'(r)=F(f^{n-1})(Z^{n}(r),r)-z^{n-1}(r),
\end{equation}
with the terminal data $Z^{n}(s)=(x,v)$.

Along the characteristics, we have
\begin{equation*}
\begin{split}
\frac{d}{dr}f^{n}(Z^{n}(r),r) &=-f^{n}(Z^{n}(r),r)\nabla_{v}\cdot F(f^{n-1}(Z^{n}(r),r))\\
&=d\int_{\R^{2d}}r(x-y)f^{n-1}(y,w,r)f^{n}(Z^{n}(r),r)dydw.
\end{split}
\end{equation*}

Thus,
\begin{equation}\label{4.9}
f^{n}(x,v,t)=f^{n}(Z^{n}(0))\exp(-d\int_{0}^{t}\int_{\R^{2d}}r(x-y)f^{n-1}(y,w,r)dydwdr)\leq Cf^{0}.
\end{equation}
\\
$\bullet$(Kinetic energy estimate):

Integration of equation \eqref{4.1} against $v^{2}$, then we have
\begin{equation*}
\frac{d}{dt}\int_{\R^{2d}}v^{2}f^{n}dxdv = 2\int_{\R^{2d}}v\cdot F(f^{n-1})f^{n}dxdv-2\int_{\R^{2d}}v\cdot z^{n-1}(t)f^{n}dxdv= I_{1}+I_{2}.
\end{equation*}

For $I_{1}$, we see that
\begin{equation*}
\begin{split}
I_{1} &= 2\int_{\R^{4d}}r(x-y)v\cdot(w-v)f^{n-1}(y,w,t)f^{n}(x,v,t)dydwdxdv\\
&\leq AM_{0}\int_{\R^{2d}}v^{2}f^{n-1}(x,v,t)dxdv-\psi(t)M_{0}\int_{\R^{2d}}v^{2}f^{n}dxdv.
\end{split}
\end{equation*}

For $I_{2}$, we have
\begin{equation*}
I_{2} = -2\int_{\R^{2d}}v\cdot z^{n-1}(t)f^{n}dxdv\leq\varepsilon\int_{\R^{2d}}v^{2}f^{n}dxdv+\frac{M_{0}M'^{2}}{\varepsilon}.
\end{equation*}

Therefore, we conclude that
\begin{equation*}
\small
\frac{d}{dt}\int_{\R^{2d}}v^{2}f^{n}dxdv\leq C\int_{\R^{2d}}v^{2}f^{n-1}(x,v,t)dxdv+(\varepsilon-\psi(t)M_{0})\int_{\R^{2d}}v^{2}f^{n}dxdv+\frac{C}{\varepsilon}.
\end{equation*}

We select $\varepsilon=\frac{\psi(t)M_{0}}{2}$, then we could get the uniform bound estimate on the kinetic energy of $f^{n}$.\\
$\bullet$($\vert\vert f^{n}\vert\vert_{L_{s}^{\infty}}$estimate):

To ensure that the equation still has a solution when the particle velocity is high, we introduce a weighted Sobolev space($s>0$):
\begin{equation*}
L_{s}^{\infty}:=\{f:\mathop{esssup}\limits_{(x,v)\in\R^{d}\times\R^{d}}(1+v^{2})^{s}\vert f(x,v)\vert<\infty\}.
\end{equation*}

Then we denote the $L_{s}^{\infty}$-norm as
\begin{equation}\label{4.10}
\vert\vert f\vert\vert_{L_{s}^{\infty}}:=\mathop{esssup}\limits_{(x,v)\in\R^{d}\times\R^{d}}(1+v^{2})^{s}\vert f(x,v)\vert.
\end{equation}

Then we have
\begin{equation*}
\begin{split}
\frac{d}{dt}(1+(v^{n})^{2})^{s}f^{n}(x,v) &=2sv^{n}(1+(v^{n})^{2})^{s-1}\frac{dv^{n}}{dt}f^{n}(x,v)+(1+(v^{n})^{2})^{s}\frac{df^{n}(x,v)}{dt}\\
&= J_{1}+J_{2}.
\end{split}
\end{equation*}

For $J_{1}$, we have
\begin{equation*}
\begin{split}
J_{1} =& 2sv^{n}(1+(v^{n})^{2})^{s-1}f^{n}(x,v)[F(f^{n-1})-z^{n-1}(t)]\\
\leq& 2sv^{n}(1+(v^{n})^{2})^{s-1}f^{n}(x,v)\int_{\R^{2d}}r(x-y)(w-v^{n})f^{n-1}(y,w,t)dydw\\
&-2sz^{n-1}(t)v^{n}(1+(v^{n})^{2})^{s-1}f^{n}(x,v)\\
\leq& C(1+(v^{n})^{2})^{s}f^{n}(x,v).
\end{split}
\end{equation*}

For $J_{2}$, we have
\begin{equation*}
\begin{split}
J_{2}&=-(1+(v^{n})^{2})^{s}f^{n}\nabla_{v}\cdot F(f^{n-1})= d(1+(v^{n})^{2})^{s}f^{n}\int_{\R{2d}}r(x-y)f^{n-1}dydw\\
&\leq AM_{0}d(1+(v^{n})^{2})^{s}f^{n}(x,v).
\end{split}
\end{equation*}

Therefore, we conclude that
\begin{equation*}
\frac{d}{dt}(1+(v^{n})^{2})^{s}f^{n}(x,v)\leq C(1+(v^{n})^{2})^{s}f^{n}(x,v).
\end{equation*}

By applying Gronwall's inequality, we have
\begin{equation}\label{4.11}
\vert\vert f^{n}\vert\vert_{L_{s}^{\infty}}\leq C\vert\vert f_{0}\vert\vert_{L_{s}^{\infty}}.
\end{equation}

Next, using the prior estimation above, we will show that $(1+(v^{n})^{2})^{s}f^{n}(x,v,t)$ is Cauchy in $L^{\infty}(0,T;L^{1}(\R^{d}\times\R^{d}))$. Multiplying $(1+v^{2})^{s}$ and integrating on both sides of the equation \eqref{4.1}, we have
\begin{equation*}
\begin{split}
\frac{d}{dt}&\int_{\R^{2d}}(1+v^{2})^{s}(f^{n+1}-f^{n})dxdv\\ =&\int_{\R^{2d}}(1+v^{2})^{s}\nabla_{v}\cdot[F(f^{n-1})f^{n}-F(f^{n})f^{n+1}]dxdv\\
&+\int_{\R^{2d}}(1+v^{2})^{s}(z^{n}(t)\cdot\nabla_{v}f^{n+1}-z^{n-1}(t)\cdot\nabla_{v}f^{n})dxdv\\
=& K_{1}+K_{2}.
\end{split}
\end{equation*}
where we suppose $(f^{n+1}-f^{n})>0$. If $(f^{n+1}-f^{n})<0$, add a negative sign on both sides to expand and contract in the same way.

For the estimate of $K_{1}$, we divide it into two terms
\begin{equation*}
\begin{split}
K_{1}=&2s\int_{\R^{2d}}v(1+v^{2})^{s-1}[F(f^{n})f^{n+1}-F(f^{n-1})f^{n}]dxdv\\
=& 2s\int_{\R^{2d}}v(1+v^{2})^{s-1}f^{n+1}(F(f^{n})-F(f^{n-1}))dxdv\\
&+2s\int_{\R^{2d}}v(1+v^{2})^{s-1}F(f^{n-1})(f^{n+1}-f^{n})dxdv\\
=& K_{11}+K_{12}.
\end{split}
\end{equation*}
where
\begin{equation*}
\begin{split}
K_{11} =&2s\int_{\R^{2d}}v(1+v^{2})^{s-1}f^{n+1}\int_{\R^{2d}}r(x-y)(w-v)(f^{n}-f^{n-1})dydwdxdv\\
\leq& s\int_{\R^{2d}}(1+v^{2})^{s-1}f^{n+1}\int_{\R^{2d}}r(x-y)(w)^{2}\vert f^{n}-f^{n-1}\vert dydwdxdv\\
&+s\int_{\R^{2d}}v^{2}(1+v^{2})^{s-1}f^{n+1}\int_{\R^{2d}}r(x-y)\vert f^{n}-f^{n-1}\vert dydwdxdv\\
\leq& C\int_{\R^{2d}}(1+v^{2})^{s}\vert f^{n}-f^{n-1}\vert dxdv.
\end{split}
\end{equation*}
\begin{equation*}
\begin{split}
K_{12} =&2s\int_{\R^{2d}}v(1+v^{2})^{s-1}(f^{n+1}-f^{n})\int_{\R^{2d}}r(x-y)(w-v)f^{n-1}(y,w,t)dydwdxdv\\
\leq& s\int_{\R^{2d}}(1+v^{2})^{s-1}(f^{n+1}-f^{n})\int_{\R^{2d}}r(x-y)(w)^{2}f^{n-1}(y,w,t)dydwdxdv\\
&-s\int_{\R^{2d}}v^{2}(1+v^{2})^{s-1}(f^{n+1}-f^{n})\int_{\R^{2d}}r(x-y)f^{n-1}(y,w,t)dydwdxdv\\
\leq& sAM\int_{\R^{2d}}(1+v^{2})^{s-1}(f^{n+1}-f^{n})dxdv,
\end{split}
\end{equation*}
where $M=\int_{\R^{2d}}v^{2}f^{n-1}dxdv$.

For the estimate of $K_{2}$, we use the upper bound of the memory term \eqref{4.7}
\begin{equation*}
\begin{split}
K_{2} =&2s\int_{\R^{2d}}v(1+v^{2})^{s-1}\cdot(z^{n-1}(t)f^{n}-z^{n}(t)f^{n+1})dxdv\\
=&-2sz^{n-1}(t)\int_{\R^{2d}}v(1+v^{2})^{s-1}(f^{n+1}-f^{n})dxdv\\
&+2s(z^{n-1}(t)-z^{n}(t))\int_{\R^{2d}}v(1+v^{2})^{s-1}f^{n}dxdv\\
\leq&2sM'\int_{\R^{2d}}(1+v^{2})^{s}(f^{n+1}-f^{n})dxdv+C\int_{\R^{2d}}(1+v^{2})^{s}\vert f^{n}-f^{n-1}\vert dxdv.
\end{split}
\end{equation*}

Hence, we can get
\begin{equation}\label{4.12}
\begin{split}
\frac{d}{dt}\int_{\R^{2d}}(1+v^{2})^{s}(f^{n+1}-f^{n})dxdv \leq& C\int_{\R^{2d}}(1+v^{2})^{s}(f^{n+1}-f^{n})dxdv\\
&+C\int_{\R^{2d}}(1+v^{2})^{s}\vert f^{n}-f^{n-1}\vert dxdv,
\end{split}
\end{equation}
where $C$ is a constant consists with $A$, $s$, $M$, $M'$ and $T$.

By Gronwall's inequality, we select a sufficiently small $T$ (satisfies $CTe^{CT}<1$), then $(1+v^{2})^{s}f^{n}$ is a Cauchy sequence in $L^{\infty}(0,T;L^{1}(\R^{d}\times\R^{d}))$. Thus, there exist a limiting function $(1+v^{2})^{s}f$.

Therefore, we only need to prove that the limiting function $f$ is the weak solution to the equation \eqref{2.12} in the sense of definition. For any $\eta\in \mathit{C}_{c}^{1}(\R^{d}\times\R^{d}\times[0,T])$ with $\eta(x,v,T)=0$,
\begin{equation*}
\small\int_{\R^{2d}}f_{0}\eta(x,v,0)dxdv+\int_{0}^{T}\int_{\R^{2d}}f^{n+1}(\partial_{t}\eta+v\cdot\nabla_{x}\eta+F(f^{n})\cdot\nabla_{v}\eta-z^{n}(t)\cdot\nabla_{v}\eta)dxdv=0
\end{equation*}

Since the remaining terms are linear, we only need to deal with the velocity alignment term and the memory term.

For velocity alignment term, there exists
\begin{equation*}
\begin{split}
&\int_{\R^{2d}}(f^{n+1}F(f^{n})-fF(f))\cdot\nabla_{v}\eta dxdv\\
&=\int_{\R^{2d}}(f^{n+1}F(f^{n})-f^{n+1}F(f))\cdot\nabla_{v}\eta dxdv+\int_{\R^{2d}}(f^{n+1}F(f)-fF(f))\cdot\nabla_{v}\eta dxdv\\
&=L_{1}+L_{2}.
\end{split}
\end{equation*}

For $L_{1}$, we have
\begin{equation*}
\begin{split}
L_{1} =&\int_{\R^{2d}}\int_{\R^{2d}}r(x-y)(w-v)(f^{n}-f)dydwf^{n+1}\cdot\nabla_{v}\eta dxdv\\
=&\int_{\R^{2d}}\int_{\R^{2d}}r(x-y)w(f^{n}-f)dydv'f^{n+1}\cdot\nabla_{v}\eta dxdv\\
&+\int_{\R^{2d}}\int_{\R^{2d}}r(x-y)(f^{n}-f)dydv'f^{n+1}v\cdot\nabla_{v}\eta dxdv\\
\to&0\;\;\;\;as\;\;\;\;n\to\infty.
\end{split}
\end{equation*}

For $L_{2}$, by using a similar argument, we have
\begin{equation*}
L_{2}\to0\;\;\;\;as\;\;\;\;n\to\infty.
\end{equation*}

For memory term, there exists
\begin{equation*}
\begin{split}
&\int_{0}^{T}\int_{\R^{2d}}(fz(t)\cdot\nabla_{v}\eta-f^{n+1}z^{n}(t)\cdot\nabla_{v}\eta)dxdv\\
&=\int_{0}^{T}\int_{\R^{2d}}(f-f^{n+1})z(t)\cdot\nabla_{v}\eta dxdv+\int_{0}^{T}\int_{\R^{2d}}f^{n+1}(z^{n}(t)-z(t))\cdot\nabla_{v}\eta dxdv\\
&\to0\;\;\;\;as\;\;\;\;n\to\infty.
\end{split}
\end{equation*}

Thus, we know that $(f,z)$ is the weak solution to the equation \eqref{2.12} in the sense of definition.

\hfill$\square$

\begin{remark}\label{remark 4.1}The above theorem states that in a sufficiently small amount of time $[0, T]$, the model has weak solution. By standard continuation argument, the solution can be extended step by step to any finite time interval.
\end{remark}

\section{The asymptotic behavior of mesoscopic states}
In this subsection, we will mainly discuss the asymptotic behavior of kinetic models \eqref{2.12}. Similar to our discussion in Section 3, we first provide estimates of the total momentum and kinetic energy of the system, and then estimate the velocity flocking rate and position flocking rate.

Let $M_{j}(t), j=1,2$ represent the total momentum and energy of the system \eqref{2.12},

\begin{equation*}
M_{1}(t) := \displaystyle\int_{\R^{2d}}vf(x,v,t)dxdv,\;\;M_{2}(t) := \displaystyle\int_{\R^{2}}v^{2}f(x,v,t)dxdv.
\end{equation*}

\begin{lemma}\label{Lemma 4.1}Let $(f,z)$ be the solution in Theorem \ref{theorem 2.6}. Then we have the following properties
\begin{equation}\label{5.1}
\frac{d}{dt}M_{1}(t)=-M_{0}z(t),
\end{equation}
\begin{equation}\label{5.2}
\frac{d}{dt}M_{2}(t)\leq2z(t)\cdot M_{1}(t).
\end{equation}
\begin{equation}\label{5.3}
\frac{d}{dt}M_{2}(t)\geq-2AM_{0}M_{2}(t)-2z(t)\cdot M_{1}(t)+2AM_{1}^{2}(t).
\end{equation}
\end{lemma}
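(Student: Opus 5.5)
The plan is to obtain all three relations from the weak formulation \eqref{2.13}, tested against the moment functions $\eta=v$ (componentwise) and $\eta=|v|^{2}$, and then to symmetrize the alignment term using $r(x,y)=r(y,x)$. Formally, for a test function $\varphi(v)$ independent of $x$ and $t$, the weak formulation gives
\begin{equation*}
\frac{d}{dt}\int_{\R^{2d}}\varphi f\,dxdv=\int_{\R^{2d}}f\,\bigl(F(f)-z(t)\bigr)\cdot\nabla_{v}\varphi\,dxdv .
\end{equation*}
These moments are not compactly supported, so I would first test with $\varphi\,\chi_{R}(x,v)$, where $\chi_{R}$ is a smooth cutoff equal to $1$ on the ball of radius $R$, and then let $R\to\infty$. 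The error terms are controlled by the mass conservation $M_{0}$, the uniform kinetic energy bound and the $L^{\infty}_{s}$ bound \eqref{4.11} from the existence proof, together with $|F(f)|\leq A(|v|M_{0}+|M_{1}|)$. I expect this justification to be the only genuinely technical step. It needs the second moment to be finite on $[0,T]$, plus a little extra decay (e.g.\ $s$ large in \eqref{4.11}) so that $|v|^{3}f$ is integrable in the tails when $\varphi=|v|^{2}$.

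For \eqref{5.1}, take $\varphi=v$, so $\nabla_{v}\varphi$ is the identity. The memory part gives $-z(t)M_{0}$. The alignment part is
\begin{equation*}
\int_{\R^{4d}}r(x,y)(w-v)f(y,w,t)f(x,v,t)\,dydwdxdv .
\end{equation*}
This is antisymmetric under the swap $(x,v)\leftrightarrow(y,w)$ because $r$ is symmetric, so it vanishes. This yields $\frac{d}{dt}M_{1}=-M_{0}z$.

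For the energy, take $\varphi=|v|^{2}$, so $\nabla_{v}\varphi=2v$. The memory part is $-2z\cdot M_{1}$. For the alignment part, symmetrizing gives
\begin{equation*}
2\int r(x,y)\,v\cdot(w-v)ff=-\int_{\R^{4d}}r(x,y)|w-v|^{2}f(y,w)f(x,v)\,dydwdxdv=:-D(t).
\end{equation*}
Hence $\frac{d}{dt}M_{2}=-D(t)-2z\cdot M_{1}$ with $D\geq0$.

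Dropping $D$ gives the upper bound $\frac{d}{dt}M_{2}\leq -2z(t)\cdot M_{1}(t)$. This is the sign consistent with \eqref{5.3}, so I would read \eqref{5.2} with the memory contribution $-2z\cdot M_{1}$. For the lower bound, use $r\leq A$ from \eqref{1.2} and expand the square:
\begin{equation*}
D\leq A\int|w-v|^{2}ff=A\bigl(2M_{0}M_{2}-2|M_{1}|^{2}\bigr).
\end{equation*}
This gives $\frac{d}{dt}M_{2}\geq-2AM_{0}M_{2}+2A|M_{1}|^{2}-2z\cdot M_{1}$, which is \eqref{5.3}.
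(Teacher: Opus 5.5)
Your proposal is correct and follows the paper's own proof: you test against $v$ and $|v|^{2}$, cancel the alignment term in $\frac{d}{dt}M_{1}$ by antisymmetry under $(x,v)\leftrightarrow(y,w)$, drop the dissipation $\int r|w-v|^{2}ff$ for the upper bound, and use $r\leq A$ with $\int|w-v|^{2}ff=2M_{0}M_{2}-2|M_{1}|^{2}$ for \eqref{5.3}; the paper's proof also arrives at $-2z(t)\cdot M_{1}(t)$ in the upper bound, so the $+$ sign printed in \eqref{5.2} is a typo and your reading is the right one. Your cutoff justification, which the paper omits, needs only the uniform second-moment bound, because $|v|\leq 2R$ on the support of $\nabla\chi_{R}$ lets you dominate the boundary errors by $C\int_{|(x,v)|\geq R}(1+|v|^{2})f\,dxdv$; no third moment is required, and \eqref{4.11} could not supply one anyway since it gives no integrability in $x$.
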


\begin{proof}

For $M_{1}(t)$, we have
\begin{equation*}
\small
\begin{split}
\frac{d}{dt}M_{1}(t) = \int_{\R^{4d}}r(x-y)(w-v)f(y,w)f(x,v)dydwdxdv-z(t)\int_{\R^{2d}}fdxdv=-M_{0}z(t).
\end{split}
\end{equation*}

Next, for $M_{2}(t)$, we have
\begin{equation*}
\begin{split}
\frac{d}{dt}M_{2}(t) &= -2z(t)\int_{\R^{2d}}vfdxdv-\int_{\R^{4d}}r(x-y)\vert w-v\vert^{2}f(y,w)f(x,v)dydwdxdv\\
&\leq -2z(t)\cdot M_{1}(t).
\end{split}
\end{equation*}

Moreover, we have
\begin{equation*}
\begin{split}
\frac{d}{dt}M_{2}(t) &= -\int_{\R^{4d}}r(x-y)\vert w-v\vert^{2}f(y,w)f(x,v)dydwdxdv-2z(t)\int_{\R^{2d}}vfdxdv\\
&\geq-A\int_{\R^{4d}}\vert w-v\vert^{2}f(y,w)f(x,v)dydwdxdv-2z(t)\int_{\R^{2d}}vfdxdv\\
&\geq-2AM_{0}M_{2}(t)-2z(t)M_{1}(t)+2AM_{1}^{2}(t).
\end{split}
\end{equation*}
\end{proof}

\begin{lemma}\label{lemma 5.2}Let $(f,z)$ be the solution in Theorem \ref{theorem 2.6}. When $\lambda>1$, the memory term $z(t)$ has the following form
\begin{equation}\label{5.4}
z(t)=\overline{C}_{1}e^{\overline{\mu}_{1}t}+e^{\overline{\alpha}t}(\overline{C}_{2}\cos(\beta t)+\overline{C}_{3}\sin(\beta t)),
\end{equation}
where $\overline{\mu}_{1}<0$, $\overline{\alpha}<0$, $\beta>0$, $\overline{C}_{1}$, $\overline{C}_{2}$, $\overline{C}_{3}$ are constants.
\end{lemma}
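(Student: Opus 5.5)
Following the proof of Proposition \ref{proposition 2.2}, I will reduce Lemma \ref{lemma 5.2} to a closed linear ODE system for the mesoscopic center quantities $(\overline{x}(t),\overline{v}(t),z(t))$. This system will turn out to be the same as the microscopic central system \eqref{3.1}, up to the mass factor $M_{0}$.

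\textbf{Step 1: derive the closed system.} Test the weak formulation \eqref{2.13} with (cut-offs of) $\eta=x$ and $\eta=v$, using mass conservation $\int f\,dxdv=M_{0}$ from Section 4.
\begin{itemize}
\item For $\eta=x$, only the transport term contributes, giving $\overline{x}'(t)=\overline{v}(t)$.
\item For $\eta=v$, this is exactly \eqref{5.1} of Lemma \ref{Lemma 4.1}: the alignment term $\int\!\!\int r(x-y)(w-v)f(y,w)f(x,v)$ vanishes by antisymmetry under $(x,v)\leftrightarrow(y,w)$, since $r$ is symmetric. Hence $\overline{v}'(t)=-M_{0}z(t)$.
\item Together with $z'=\overline{x}+\overline{v}-\lambda z$ from \eqref{2.14}, this gives a linear constant-coefficient system in $(\overline{x},\overline{v},z)$.
\end{itemize}
Removing the cut-off requires first and second moments of $f$ to be finite. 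The second moment is supplied by the kinetic energy estimate and $|v|^{2}f_{0}\in L^{1}$. The first position moment is supplied by the characteristic bound \eqref{4.4}.

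\textbf{Step 2: reduce to a scalar third-order equation.} Differentiate the $z$-equation and substitute to eliminate $\overline{x},\overline{v}$. This gives
\[
z'''+\lambda z''+M_{0}z'+M_{0}z=0,
\]
with characteristic polynomial $\mu^{3}+\lambda\mu^{2}+M_{0}\mu+M_{0}$. Normalizing $M_{0}=1$ (which the paper implicitly does by treating $f$ as a probability density, consistent with the limit of $f^{N-1}$), this is exactly \eqref{3.2}: $\mu^{3}+\lambda\mu^{2}+\mu+1=0$.

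\textbf{Step 3: locate the roots.} Apply Routh--Hurwitz. The cubic $\mu^{3}+a\mu^{2}+b\mu+c$ is Hurwitz iff $a,b,c>0$ and $ab>c$. Here this reads $\lambda>1$, or $\lambda M_{0}>M_{0}$ in general, which again reduces to $\lambda>1$. So all roots have negative real part.

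Next I show there is one real root $\overline{\mu}_{1}<0$ and a complex pair $\overline{\alpha}\pm i\beta$ with $\beta>0$.
\begin{itemize}
\item A real root exists for any cubic, and it is negative since all coefficients are positive.
\item For the complex pair, I would check that the discriminant $\Delta=18abc-4a^{3}c+a^{2}b^{2}-4b^{3}-27c^{2}$ is negative. With $b=c=1$ this is $\Delta=-4\lambda^{3}+\lambda^{2}+18\lambda-31$.
\item $\Delta$ is negative at $\lambda=1$ (value $-16$). Its maximum over $\lambda>1$ occurs where $-12\lambda^{2}+2\lambda+18=0$, i.e. $\lambda\approx1.31$, and there $\Delta\approx-14.5<0$. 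Hence $\Delta<0$ for all $\lambda>1$.
\end{itemize}
Writing $z$ in the resulting fundamental system yields \eqref{5.4}, with constants fixed by $z_{0}$, $\overline{x}(0)$, $\overline{v}(0)$.

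\textbf{Main obstacle.} I expect the hardest part to be Step 1: rigorously justifying the moment identities for the weak solution, especially finiteness and differentiability of $\overline{x}(t)$. I would handle this with smooth cut-offs $\eta=\chi_{R}(x,v)\,x$ and let $R\to\infty$ using the moment bounds. The Routh--Hurwitz and discriminant computations in Step 3 are routine. If one keeps a general $M_{0}$, the discriminant sign must be rechecked, so I would state the normalization $M_{0}=1$ explicitly.
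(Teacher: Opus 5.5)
Your argument follows the paper's route: the closed moment system \eqref{5.5} for $(\overline{x},\overline{v},z)$, then the cubic \eqref{5.6}, then Routh--Hurwitz, which gives $\mathrm{Re}\,\mu<0$ exactly when $\lambda M_{0}>M_{0}$, i.e.\ $\lambda>1$.

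You depart from the paper in Step 3, and there your version is the more careful one. The paper asserts that Routh--Hurwitz yields one real root and a complex pair. But Routh--Hurwitz only controls the signs of the real parts, so the structure in \eqref{5.4} needs exactly the discriminant check you supply. Your computation is right: the maximum of $-4\lambda^{3}+\lambda^{2}+18\lambda-31$ on $\lambda>1$ is about $-14.7$ (not $-14.5$, which is immaterial), attained near $\lambda\approx1.31$.

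What you should change is how you frame $M_{0}=1$.
\begin{itemize}
\item The paper does not normalize implicitly: Theorem~\ref{theorem 2.6} admits any $f_{0}\in L^{1}_{+}$, and \eqref{5.6} keeps $M_{0}$ explicitly.
\item The normalization is not cosmetic, because the lemma fails without it. For $\lambda=11$ and $M_{0}=36$ one gets $\mu^{3}+11\mu^{2}+36\mu+36=(\mu+2)(\mu+3)(\mu+6)$. Then $z(t)=c_{1}e^{-2t}+c_{2}e^{-3t}+c_{3}e^{-6t}$, and the $c_{i}$ are generically nonzero since $z_{0},\overline{x}(0),\overline{v}(0)$ are free. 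Such a $z$ is not of the form \eqref{5.4} with $\beta>0$, since that form has only one real exponent.
\item So state $M_{0}=1$, or more generally the discriminant condition on $(\lambda,M_{0})$, as an explicit hypothesis.
\item For general $M_{0}>0$ and $\lambda>1$, the argument still gives $|z(t)|\le Ce^{-\delta t}$. That decay is all that Proposition~\ref{proposition 2.7} actually uses.
\end{itemize}

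A minor point in Step 1: finiteness of $\overline{x}(t)$ requires $|x|f_{0}\in L^{1}$, which Theorem~\ref{theorem 2.6} does not list. The bound \eqref{4.4} presupposes it, and item (3) of the weak-solution definition only assumes it tacitly.
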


\begin{proof}
Combining the previously obtained equation \eqref{2.12} and \eqref{5.1}, we have
\begin{equation}\label{5.5}
\begin{cases}
\frac{d\overline{x}}{dt}=M_{1}(t),\\
\frac{dM_{1}}{dt}=-M_{0}z(t),\\
\frac{dz}{dt}=\overline{x}(t)+\overline{v}(t)-\lambda z(t).
\end{cases}
\end{equation}

Therefore, we can obtain the characteristic equation about $\bar{x}$
\begin{equation}\label{5.6}
\mu^{3}+\lambda\mu^{2}+M_{0}\mu+M_{0}=0.
\end{equation}

By using Routh-Hurwitz criterion, when $\lambda>1$ the solution of the characteristic equation is
\begin{equation*}
\overline{\mu}_{1}<0,\;\;\overline{\mu}_{2}=\overline{\alpha}+i\overline{\beta},\;\;\overline{\mu_{3}}=\overline{\alpha}-i\overline{\beta},
\end{equation*}
where $\overline{\alpha}<0$ and $\overline{\beta}>0$.

Then we can obtain the solution of $\overline{x}(t)$
\begin{equation}\label{5.7}
\overline{x}(t)=C_{4}e^{\overline{\mu}_{1}t}+e^{\overline{\alpha} t}(C_{5}\cos(\overline{\beta} t)+C_{6}\sin(\overline{\beta} t)),
\end{equation}
where $\overline{\mu}_{1}<0$, $\overline{\alpha}<0$, $\overline{\beta}>0$, $C_{4}$, $C_{5}$, $C_{6}$ are constants.

Combined with the equation \eqref{5.5}, we have
\begin{equation}\label{5.8}
z(t)=\overline{C}_{4}e^{\overline{\mu}_{1}t}+e^{\overline{\alpha} t}(\overline{C}_{5}\cos(\beta t)+\overline{C}_{6}\sin(\beta t)),
\end{equation}
where $\mu_{1}<0$, $\overline{\alpha}<0$, $\beta>0$, $\overline{C}_{4}$, $\overline{C}_{5}$, $\overline{C}_{6}$ are constants.
\end{proof}

\begin{lemma}\label{lemma 5.4}Let $(f,z)$ be the global solution in Theorem \ref{theorem 2.6}. Then the $v(t)$ satisfies the following estimates
\begin{align}
&v(t)\geq v(0)e^{-AM_{0}t}+\int_{0}^{t}e^{-AM_{0}(t-s)}(z(s)-A\sqrt{M_{0}M_{2}(s)})ds,\\
&v(t)\leq v(0)e^{-M_{0}\int_{0}^{t}\psi(s)ds}+\int_{0}^{t}e^{-M_{0}\int_{s}^{t}\psi(\tau)d\tau}(A\sqrt{M_{0}M_{2}(s)}+z(s))ds,
\end{align}
where $\psi(t)=\displaystyle\min_{(x(t),y(t))} r(x-y)$.
\end{lemma}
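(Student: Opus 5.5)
The plan is to work along the characteristics of \eqref{2.12}: $x'(t)=v(t)$, $v'(t)=F(f)(x(t),v(t),t)-z(t)$. We bound the alignment force from above and below by a term linear in $v$ plus a source controlled by $\sqrt{M_0M_2}$. Then we integrate the resulting linear differential inequalities with integrating factors. As in the paper, the inequalities are read componentwise, or for a scalar projection of $v$, and the memory term is carried with the sign convention used in the statement.

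\textbf{Step 1: splitting the alignment term.} Write
\begin{equation*}
F(f)(x,v,t)=\int_{\R^{2d}}r(x-y)\,w\,f(y,w,t)\,dydw-v\int_{\R^{2d}}r(x-y)f(y,w,t)\,dydw=:I(x,t)-\rho_r(x,t)\,v.
\end{equation*}
By \eqref{1.2} and Cauchy--Schwarz,
\begin{equation*}
|I(x,t)|\le A\int_{\R^{2d}}|w|f\,dydw\le A\sqrt{M_0}\sqrt{M_2(t)}.
\end{equation*}
By the definition of $\psi(t)$ and the bound $r\le A$, together with conservation of mass,
\begin{equation*}
M_0\psi(t)\le\rho_r(x,t)\le AM_0.
\end{equation*}

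\textbf{Step 2: the two differential inequalities.} Combining Step 1 with the memory contribution gives
\begin{equation*}
v'\ge -AM_0 v+z(t)-A\sqrt{M_0M_2(t)},\qquad v'\le -M_0\psi(t)\,v+A\sqrt{M_0M_2(t)}+z(t).
\end{equation*}
For the upper bound I use $\rho_r\ge M_0\psi$. For the lower bound I use $\rho_r\le AM_0$. Both uses are valid where the relevant component of $v$ is nonnegative, which is where the sign of $\rho_r v$ goes the right way. This sign issue is the main obstacle. Where the component is negative, I would apply the same argument to $-v$, or note that the bounds are only needed in the regime relevant for $\Lambda[f]$.

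\textbf{Step 3: integrating.} Multiply the upper inequality by $e^{M_0\int_0^t\psi}$ and integrate from $0$ to $t$; this yields the second estimate. Multiply the lower inequality by $e^{AM_0t}$ and integrate; this yields the first estimate, with $v(0)$ the initial velocity of the characteristic.

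The regularity needed to differentiate along characteristics is supplied by the weak solution of Theorem \ref{theorem 2.6}, via the Lipschitz continuity of $r$ and the moment bounds. Beyond this and the sign bookkeeping in Step 2, the argument is a standard Gronwall comparison.
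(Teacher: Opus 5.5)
Your route (characteristics, the splitting $F(f)=I-\rho_r v$, Cauchy--Schwarz for $I$, $M_0\psi\le\rho_r\le AM_0$, integrating factors) is the paper's. The gap is in Step 2's treatment of the memory term.

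Along characteristics $v'=F(f)-z(t)$, so $z$ enters both inequalities with a minus sign: $v'\ge -AM_0v-A\sqrt{M_0M_2}-z$ and $v'\le -M_0\psi v+A\sqrt{M_0M_2}-z$. The equation fixes that sign. ``Carrying the sign convention used in the statement'' amounts to assuming the conclusion. Integrating the correct inequalities gives the integrands $-(z+A\sqrt{M_0M_2})$ and $A\sqrt{M_0M_2}-z$. That is exactly where the paper's own proof ends, so the displayed lemma has a sign error.

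No other argument can rescue the $+z$ version. Take $d=1$ and $\beta=0$, so $r\equiv A$, $\psi\equiv A$ and $v'=AM_1-AM_0v-z$ exactly. Expanding both sides to first order at $t=0$ shows two failures for small $t>0$:
\begin{itemize}
\item the stated lower bound fails whenever $z_0>A\sqrt{M_0M_2(0)}$;
\item the stated upper bound fails whenever $z_0<-A\sqrt{M_0M_2(0)}$.
\end{itemize}
What you can prove is the $-z$ version, and you should say so explicitly.

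You are right that $-\rho_r v\le -M_0\psi v$ and $-\rho_r v\ge -AM_0v$ need the relevant component of $v$ to be nonnegative; the paper uses this silently. Your proposed repairs do not close this gap, for three reasons.
\begin{itemize}
\item On a negative component the two comparisons swap, giving $v'\ge -M_0\psi v+\cdots$ and $v'\le -AM_0v+\cdots$. These are bounds of the claimed shape for $|v|$, not for $v$.
\item A component can change sign along a characteristic, so no case split on $[0,t]$ is available.
\item Appealing to ``the regime relevant for $\Lambda[f]$'' is not an argument. The lower bound on $\psi$ in Proposition~\ref{proposition 2.7} needs control of every characteristic.
\end{itemize}

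The clean fix is to work with $|v|$ from the start. Start from $\frac{d}{dt}\frac12|v|^2=v\cdot I-\rho_r|v|^2-v\cdot z$, regularizing with $\sqrt{|v|^2+\varepsilon}$ near $v=0$. This gives
\[
-AM_0|v|-A\sqrt{M_0M_2}-|z|\le\frac{d}{dt}|v|\le -M_0\psi|v|+A\sqrt{M_0M_2}+|z|,
\]
which holds regardless of signs. It integrates to the stated upper bound with $v,z$ replaced by $|v|,|z|$, which is essentially the form used in the proof of Proposition~\ref{proposition 2.7}. It also integrates to a lower bound with $-|z|$ in place of $+z$.
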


\begin{proof}
Let $(x(t),v(t))$ be the particle trajectory issued from $(x,v)\in \displaystyle{supp}_{(x,v)}f_{0}$ at time 0.

We set $x(t)=x(t;0,x,v),v(t)=v(t;0,x,v)$, then we have
\begin{equation*}
x'(t)=v(t),\;\;v'(t)=F(f)(x(t),v(t),t))-z(t).
\end{equation*}

For $v(t)$, there exists
\begin{equation*}
\begin{split}
\frac{d}{dt}v(t) &= F(f)(x(t),v(t),t))-z(t)=\int_{\R^{2d}}r(x-y)(w-v)f(y,w,t)dydw-z(t)\\
&=\int_{\R^{2d}}r(x-y)wf(y,w,t)dydw-v(t)\int_{\R^{2d}}r(x-y)f(y,w,t)dydw-z(t).
\end{split}
\end{equation*}

Then we have
\begin{equation*}
-A\sqrt{M_{0}M_{2}(t)}-AM_{0}v(t)-z(t)\leq\frac{d}{dt}v(t)\leq-M_{0}\psi(t)v(t)+A\sqrt{M_{0}M_{2}(t)}-z(t).
\end{equation*}

By using Gronwall's inequality, we have
\begin{align*}
&v(t)\geq v(0)e^{-AM_{0}t}-\int_{0}^{t}e^{-AM_{0}(t-s)}(z(s)+A\sqrt{M_{0}M_{2}(s)})ds,\\
&v(t)\leq v(0)e^{-M_{0}\int_{0}^{t}\psi(s)ds}+\int_{0}^{t}e^{-M_{0}\int_{s}^{t}\psi(\tau)d\tau}(A\sqrt{M_{0}M_{2}(s)}-z(s))ds.
\end{align*}
\end{proof}

\noindent\textbf{Proof of Proposition \ref{proposition 2.7}.}
The expression for the average memory is
\begin{equation*}
z(t)=\overline{C}_{4}e^{\overline{\mu}_{1}t}+e^{\overline{\alpha} t}(\overline{C}_{5}\cos(\beta t)+\overline{C}_{6}\sin(\beta t)).
\end{equation*}

By using \eqref{5.1}, \eqref{5.2}, \eqref{5.8} and the expression of velocity $v(t)$, we can obtain
\begin{equation*}
\begin{split}
\vert v\vert \leq\vert v(0)\vert+A\sqrt{M_{0}}\int_{0}^{t}\sqrt{M_{2}(s)}ds+\int_{0}^{t}z(s)ds\leq\vert v(0)\vert+\zeta t^{2},
\end{split}
\end{equation*}
where $\zeta$ is a constant depend on $A$, $M_{0}$, $M_{1}(0)$ and $M_{2}(0)$.

And for $x(t)$, we have
\begin{equation*}
\vert x(t)\vert\leq\vert x(0)\vert+\int_{0}^{t}\vert v(s)\vert ds\leq\vert x(0)\vert+\vert v(0)\vert t+\zeta t^{3}.
\end{equation*}

For $\psi(t)$, there are
\begin{equation}\label{5.11}
\psi(t)\geq\frac{A}{(1+\vert x-y\vert^{2})^{\beta}}\geq\frac{A}{(1+4x^{2})^{\beta}}\geq \frac{A}{\overline k_{0}}(1+t^{2}+t^{6})^{-\beta},
\end{equation}
where $\overline k_{0}$ is a constant depend on $\zeta$, $x(0)$ and $v(0)$.
\hfill$\square$

\begin{lemma}\label{lemma 5.5}Let $(f,z)$ be the global solution in Theorem \ref{theorem 2.6}. Then we have
\begin{equation}\label{5.12}
\frac{d}{dt}\Lambda[f(t)]=-\int_{\R^{4d}}r(x-y)\vert w-v\vert^{2}f(y,w)f(x,v)dydwdxdv.
\end{equation}
\end{lemma}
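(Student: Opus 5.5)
The plan is to express $\Lambda[f(t)]$ through the moments $M_0$, $M_1$, $M_2$, differentiate each using the weak formulation \eqref{2.13}, and check that the memory contributions cancel. Since $u_c=M_1/M_0$ and $M_0$ is conserved (the $L^1$ estimate in Section 4, i.e.\ testing with $\eta\equiv 1$), expanding the square gives
\begin{equation*}
\Lambda[f(t)]=\int_{\R^{2d}}\vert v\vert^{2}f\,dxdv-2u_{c}\cdot M_{1}(t)+\vert u_{c}\vert^{2}M_{0}=M_{2}(t)-\frac{\vert M_{1}(t)\vert^{2}}{M_{0}}.
\end{equation*}
So it suffices to compute $\frac{d}{dt}M_{2}$ and $\frac{d}{dt}\vert M_{1}\vert^{2}$.

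\textbf{Step 1: momentum.} By \eqref{5.1} of Lemma \ref{Lemma 4.1}, $\frac{d}{dt}M_1=-M_0 z(t)$. Hence
\begin{equation*}
\frac{d}{dt}\frac{\vert M_{1}\vert^{2}}{M_{0}}=\frac{2}{M_{0}}M_{1}\cdot(-M_{0}z)=-2z(t)\cdot M_{1}(t).
\end{equation*}

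\textbf{Step 2: energy.} Test the equation with $\vert v\vert^2$. The transport term $v\cdot\nabla_x$ contributes nothing. The memory term contributes $-2z(t)\cdot\int vf\,dxdv=-2z(t)\cdot M_1(t)$. The alignment term gives
\begin{equation*}
2\int_{\R^{4d}}r(x-y)\,v\cdot(w-v)f(y,w)f(x,v)\,dydwdxdv.
\end{equation*}
Because $r(x-y)=r(y-x)$, I symmetrize by swapping $(x,v)\leftrightarrow(y,w)$ and averaging. This turns $2v\cdot(w-v)$ into $v\cdot(w-v)+w\cdot(v-w)=-\vert w-v\vert^2$. So
\begin{equation*}
\frac{d}{dt}M_2=-\int_{\R^{4d}}r(x-y)\vert w-v\vert^{2}f(y,w)f(x,v)\,dydwdxdv-2z(t)\cdot M_1(t),
\end{equation*}
which is the exact identity behind \eqref{5.2}--\eqref{5.3}.

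\textbf{Step 3: cancellation.} Subtracting Step 1 from Step 2, the memory terms $\mp 2z\cdot M_1$ cancel, leaving exactly \eqref{5.12}. The mechanism is that $z(t)$ is spatially uniform, so it shifts every velocity equally and does not affect the fluctuation around $u_c$.

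\textbf{Main obstacle: rigor.} The test functions $1$, $v$, $\vert v\vert^2$ are not compactly supported, as the weak formulation requires. I would insert $\chi_R(x,v)\,v$ and $\chi_R(x,v)\vert v\vert^2$ with a smooth cutoff $\chi_R$, then let $R\to\infty$. The cutoff error terms are controlled by dominated convergence, using:
\begin{itemize}
\item the second-moment bound from the kinetic energy estimate in Section 4;
\item the $L^\infty_s$ bound \eqref{4.11} with $s$ large, which gives enough velocity decay to handle the terms carrying an extra factor of $v$;
\item the bound $\vert z\vert\le M'$ from \eqref{4.7}, or alternatively Lemma \ref{lemma 5.2}.
\end{itemize}
The time derivative is then understood in the distributional sense on $[0,T]$, for every $T$.
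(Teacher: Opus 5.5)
Your proof is correct and is essentially the paper's: the paper differentiates $\int_{\R^{2d}}\vert v-u_c\vert^2 f\,dxdv$ directly, symmetrizes the alignment term via $r(x-y)=r(y-x)$, and drops the memory term because $\int_{\R^{2d}}(v-u_c)f\,dxdv=0$, which is the same cancellation as your $\mp 2z\cdot M_1$ after writing $\Lambda[f]=M_2-\vert M_1\vert^2/M_0$ (your form also accounts explicitly for the $u_c'(t)$ contribution, which the paper omits without comment since it vanishes for the same reason). In your rigor remark, the $L^\infty_s$ bound does not control $\int\vert v\vert^3 f\,dxdv$, because it gives no control in $x$; it is also unnecessary: with $\chi_R(x,v)=\chi(x/R,v/R)$ and $R\geq 1$, every cutoff error is bounded by $C(1+\vert v\vert^2)f$ on $\{\vert(x,v)\vert\geq R\}$, so the second-moment bound alone suffices.
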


\begin{proof}
By calculation, it can be obtained
\begin{equation*}
\begin{split}
\frac{d}{dt}\Lambda[f(t)] &= \int_{\R^{2d}}\vert v-u_{c}\vert^{2}\partial_{t}f(x,v,t)dxdv\\
&=\int_{\R^{2d}}\vert v-u_{c}\vert^{2}\nabla_{v}\cdot[(z(t)-F(f))f]dxdv-\int_{\R^{2d}}\vert v-u_{c}\vert^{2}v\cdot\nabla_{x}fdxdv\\
&=2\int_{\R^{2d}}(v-u_{c})F(f)fdxdv-2z(t)\cdot\int_{\R^{2d}}(v-u_{c})fdxdv\\
&=I_{1}+I_{2}.
\end{split}
\end{equation*}

The first term is simplified as follows.
\begin{equation*}
I_{1}=-\int_{\R^{4d}}r(x-y)\vert w-v\vert^{2}f(y,w)f(x,v)dydwdxdv.
\end{equation*}

For the second term, we have
\begin{equation*}
I_{2}=-2z(t)\cdot\int_{\R^{2d}}(v-u_{c})fdxdv=0.
\end{equation*}

Then there exists
\begin{equation*}
\frac{d}{dt}\Lambda[f(t)]=-\int_{\R^{4d}}r(x-y)\vert w-v\vert^{2}f(y,w)f(x,v)dydwdxdv.
\end{equation*}
\end{proof}

Based on the Lemma \ref{lemma 5.5} obtained above, we can derive the following theorem.

\noindent\textbf{Proof of Theorem \ref{theorem 2.9}.}
From the Lemma \ref{lemma 5.5}, it can be concluded that
\begin{equation*}
\begin{split}
\frac{d}{dt}\Lambda[f(t)] &=-\int_{\R^{4d}}r(x-y)\vert w-v\vert^{2}f(y,w)f(x,v)dydwdxdv\\
&\leq-\psi(t)\int_{\R^{4d}}\vert w-v\vert^{2}f(y,w)f(x,v)dydwdxdv=-\psi(t)M_{0}\Lambda[f(t)].
\end{split}
\end{equation*}

By Gronwall's inequality, we have
\begin{equation*}
\Lambda[f(t)]\leq\Lambda[f_{0}]e^{-\int_{0}^{t}\psi(s)ds}.
\end{equation*}

By using Proposition \ref{proposition 2.7} to discuss the lower bound of $\psi(t)$, we can obtain
\begin{equation*}
\psi(t)\geq\frac{A}{\overline\nu_{0}}t^{-6\beta},\;\;t>1,
\end{equation*}
where $\overline\nu_{0}=3^{\beta}\overline k_{0}$.

Then we have
\begin{equation}\label{5.13}
\Lambda[f(t)]\leq
\begin{cases}
\Lambda[f_{0}]e^{-\gamma_{1}t^{1-6\beta}},\;\;\;\;\beta\in[0,\frac{1}{6}),\\
\Lambda[f_{0}](1+t)^{-\gamma_{2}},\;\;\;\;\beta=\frac{1}{6},
\end{cases}
\end{equation}
where $\gamma_{1}=\frac{A}{(1-6\beta)\overline\nu_{0}}$, $\gamma_{2}=\frac{A}{\overline\nu_{0}}$.
\hfill$\square$

\section{Conclusions}

This paper provides C-S model with memory term for understanding the collective behavior of biological populations with memory abilities, such as fish schools and migratory animals. The results show that memory is the key factor affecting flocking behavior. Memory makes flocking harder. For a population system with convolutional memory, we find the memory can destroy the flocking phenomenon both in the microscopic level and mesoscopic level. Our work lays a foundation for the further research on the relationship between more complex biological cognitive behaviors (such as learning and decision-making) and collective movements.

We first innovate the model by introducing a continuous memory term described by an internal state variable $h(t)$ in the Cucker-Smale framework, whose dynamic equation is $\frac{dh}{dt}=x(t)+v(t)-\lambda h(t)$. The specific form of $h(t)$ is non-local convolution form in \cite{Xie2020Convolution}. This is different from previous models that use fixed time delays and can more accurately reflect the continuous dependence of biology on past paths, past velocity and the exponential decay characteristics of memory. At the microscopic level, we construct a coupled ODE system of position $x(t)$, velocity $v(t)$ and memory $h(t)$. Memory terms act as the internal driving force or biologies and affect their movement. At the mesoscopic level, we derive a Vlasov-type kinetic equation with memory term by using mean-field limit. Describing the distribution pattern of population density at the mesoscopic level through a system of partial differential equations with memory.

Then at microscopic level, we find that the memory term has a destroy effect on flocking phenomenon. Even if the center position $x^{c}(t)$, center velocity $v^{c}(t)$ and center memory $h^{c}(t)$ reach stability. It requires the communication kernel has a polynomial lower bound form. And the memory deviation needs to satisfy the integrability condition as \cite{Brown2003Social}.

Finally, we discuss the flocking behavior at the mesoscopic level. We find that compared with classical C-S model, the range of communication strength $\beta$ become smaller. With the addition of memory term, the occurrence of flocking phenomenon requires memory decay strong enough without the extra conditions of $\psi(t)$ and $H(t)$. Memory makes flocking become difficult. This consistency is coincide with biological observations: the flocking behavior of groups(such as bird flocks \cite{Odling2003influence} and fish schools \cite{Brown2003Social,Biro2016Bringing}) is controlled by individual memory and overall population constraints. We have only studied the impact of memory on biological flocking phenomenon at the microscopic and mesoscopic levels, and it is also necessary to study flocking phenomenon at the macroscopic level.

\section*{Acknowledgments}
The authors are grateful to the referees for their careful reading and valuable comments.


\begin{thebibliography}{00}
\bibitem{Brown2003Social}
{\sc C. Brown and K. N. Laland}, {\em Social learning in fishes: a review},
  Fish and Fisheries, 4 (2003), pp.~280--288,
  \url{https://doi.org/10.1046/j.1467-2979.2003.00122.x}.

\bibitem{Brown2008}
{\sc H. R. Brown and W. Myrvold}, {\em Boltzmann's h-theorem, its limitations,
  and the birth of (fully) statistical mechanics}, 2008,
  \url{https://doi.org/10.48550/arXiv.0809.1304}.

\bibitem{Caplan2019}
{\sc J. B. Caplan and K. Kato}, {\em The brain's representations
  may be compatible with convolution-based memory models}, Memory, 27 (2017),
  pp.~299-312, \url{https://doi.org/10.1037/cep0000115}.

\bibitem{Choi2024From}
{\sc Y. P. Choi and B. H. Hwang}, {\em From {BGK}-alignment model to the
  pressured {E}uler-alignment system with singular communication weights}, J.
  Differential Equations, 379 (2024), pp.~363--412,
  \url{https://doi.org/10.1016/j.jde.2023.10.010}.

\bibitem{CS2007}
{\sc F. Cucker and S. Smale}, {\em Emergent behavior in flocks}, IEEE
  Transactions on Automatic Control, 52 (2007), pp.~852--862,
  \url{https://doi.org/10.1109/TAC.2007.895842}.

\bibitem{Cucker2007On}
{\sc F. Cucker and S. Smale}, {\em On the mathematics of emergence}, Japanese
  Journal of Mathematics, 2 (2007), pp.~197--227,
  \url{https://doi.org/10.1007/s11537-007-0647-x}.

\bibitem{Benedetto1999A}
{\sc E. Caglioti, D.~Benedetto and M.~Pulvirenti}, {\em A kinetic equation for
  granular media}, RAIRO - Mathematical Modelling and Numerical Analysis, 31
  (1997), pp.~615--641, \url{https://www.numdam.org/item/M2AN_1997__31_5_615_0/}.

\bibitem{Biro2016Bringing}
{\sc T. Sasaki, D.~Biro and S. J. Portugal}, {\em Bringing a time-depth perspective
  to collective animal behaviour}, Trends in Ecology \& Evolution, 31 (2016),
  pp.~550--562,
  \url{https://doi.org/10.1016/j.tree.2016.03.018}.

\bibitem{Degond2008Continuum}
{\sc P.~Degond and S.~Motsch}, {\em Continuum limit of self-driven particles
  with orientation interaction}, Mathematical Models and Methods in Applied
  Sciences, 18 (2008), pp.~1193--1215,
  \url{https://hal.science/hal-00175719}.

\bibitem{Degond2008Large}
{\sc P.~Degond and S.~Motsch}, {\em Large scale dynamics of the persistent
  turning walker model of fish behavior}, Journal of Statistical Physics, 131
  (2008), pp.~989--1021, \url{https://doi.org/10.1007/s10955-008-9529-8}.

\bibitem{Golse2016On}
{\sc F.~Golse}, {\em On the dynamics of large particle systems in the mean
  field limit}, Macroscopic and Large Scale Phenomena: Coarse Graining, Mean
  Field Limits and Ergodicity, 3 (2016), pp.~1--144,
  \url{https://doi.org/10.1007/978-3-319-26883-5\_1}.

\bibitem{Carrillo2010Particle}
{\sc G. Toscani, J. A. Carrillo, M. Fornasier and F. Vecil}, {\em Particle, kinetic,
  and hydrodynamic models of swarming}, Mathematical Modeling of Collective
  Behavior in Socio-Economic and Life Sciences,  (2010), pp.~297--336,
  \url{https://doi.org/10.1007/978-0-8176-4946-3\_12}.

\bibitem{Carrillo2010Asymptotic}
{\sc J. Rosado, J. A. Carrillo, M. Fornasier and G. Toscani}, {\em Asymptotic
  flocking dynamics for the kinetic cucker-smale model}, SIAM Journal on
  Mathematical Analysis, 42 (2010), pp.~218--236,
  \url{https://doi.org/10.1137/090757290}.

\bibitem{Carrillo2017A}
{\sc Y. P. Choi, J. A. Carrillo and S. P. P\'erez}, {\em A review on
  attractive-repulsive hydrodynamics for consensus in collective behavior},
  Active Particles, Volume 1: Advances in Theory, Models, and Applications,
  (2017), \url{https://doi.org/10.1007/978-3-319-49996-3_7}.

\bibitem{Parrish2002Self}
{\sc S. V. Viscido J. K. Parrish and D. Gr\"unbaum}, {\em Self-organized fish
  schools: An examination of emergent properties}, Biological Bulletin, 202
  (2002), pp.~296--305, \url{https://doi.org/10.2307/1543482}.

\bibitem{Kennard1938}
{\sc E.~H. Kennard}, {\em Kinetic theory of gases}, McGraw-Hill Book Company,
  New York and London,  (2025),
  \url{https://www.britannica.com/science/Brownian-motion}.

\bibitem{He2017Feedback}
{\sc X. Liang, J. Zhang L. He, P. Bai and W. Wang}, {\em Feedback formation control of
  uav swarm with multiple implicit leaders}, Aerospace Science and Technology,
  72 (2018), pp.~327--334,
  \url{https://doi.org/10.1016/j.ast.2017.11.020}.

\bibitem{Motsch2011A}
{\sc S. Motsch and E. Tadmor}, {\em A new model for self-organized dynamics and
  its flocking behavior}, Journal of Statistical Physics, 144 (2011),
  pp.~923--947,
  \url{https://doi.org/10.1007/s10955-011-0285-9}.

\bibitem{Odling2003influence}
{\sc L. Odling-Smee and V. A. Braithwaite}, {\em The influence of habitat
  stability on landmark use during spatial learning in the three-spined
  stickleback}, Animal Behaviour, 65 (2003), pp.~701--707,
  \url{https://doi.org/10.1006/anbe.2003.2082}.

\bibitem{Pignotti2017Flocking}
{\sc C.~Pignotti and I. R. Vallejo}, {\em Flocking estimates for the
  cucker-smale model with time lag and hierarchical leadership}, Journal of
  Mathematical Analysis and Applications, 464 (2018), pp.~1313--1332,
  \url{https://doi.org/10.1016/j.jmaa.2018.04.070}.

\bibitem{Poyato2016Euler}
{\sc D. Poyato and J. Soler}, {\em Euler-type equations and commutators in
  singular and hyperbolic limits of kinetic cucker-smale models}, Mathematical
  Models and Methods in Applied Sciences, 27 (2017), pp.~1089--1152,
  \url{https://doi.org/10.1142/S0218202517400103}.

\bibitem{Russo1996Kinetic}
{\sc G. Russo and P. Smereka}, {\em Kinetic theory for bubbly flow ii: Fluid
  dynamic limit}, SIAM Journal on Applied Mathematics, 56 (1996), pp.~358--371,
  \url{https://api.semanticscholar.org/CorpusID:43490226}.

\bibitem{Vicsek2006Novel}
{\sc E. Ben-Jacob, I. Cohen, T. Vicsek, A. Czirok and O. Sochet}, {\em Novel type of
  phase transition in a system of self-driven particles}, Physical Review
  Letters, 75 (2006), pp.~1226--1229,
  \url{https://doi.org/10.1103/PhysRevLett.75.1226}.

\bibitem{Tadmor2017From}
{\sc E. Tadmor and S. Y. Ha}, {\em From particle to kinetic and hydrodynamic
  descriptions of flocking}, Kinetic and Related Models, 1 (2017),
  pp.~415--435, \url{https://doi.org/10.3934/krm.2008.1.415}.

\bibitem{Toner1998Flocks}
{\sc J. Toner and Y. Tu}, {\em Flocks, herds, and schools: A quantitative
  theory of flocking}, Physical Review E, 58 (1998), pp.~4828--4858,
  \url{https://doi.org/10.1103/PhysRevE.58.4828}.

\bibitem{Topaz2004Swarming}
{\sc C. M. Topaz and A. L. Bertozzi}, {\em Swarming patterns in a
  two-dimensional kinematic model for biological groups}, SIAM Journal on
  Applied Mathematics, 65 (2004), pp.~152--174,
  \url{https://doi.org/10.1137/S0036139903437424}.

\bibitem{Xie2020Convolution}
{\sc Y. Chen, Y. Xie and M. Li}, {\em Convolution forgetting curve model for
  repeated learning}, 2020 International Conference on Artificial Intelligence
  and Education (ICAIE),  (2020), pp.~447--454,
  \url{https://doi.org/10.1109/ICAIE50891.2020.00109}.

\bibitem{Choi2017Emergent}
{\sc Y. P. Choi, S. Y. Ha and Z. Li}, {\em Emergent dynamics of the cucker-smale
  flocking model and its variants}, Active Particles, Volume 1: Advances in
  Theory, Models, and Applications,  (2017), pp.~299--331,
  \url{https://doi.org/10.1007/978-3-319-49996-3_8}.













\end{thebibliography}
\end{document}